\documentclass[a4paper,reqno]{amsart}

\usepackage[utf8]{inputenc}
\usepackage[english]{babel}
\usepackage{amsmath,amssymb,amsfonts,amsthm}
\usepackage{microtype}
\usepackage{enumitem}
\usepackage{cite}
\usepackage{url}
\usepackage{xcolor}
\usepackage{hyperref}
\hypersetup{
  colorlinks=true,
  linkcolor=blue,
  citecolor=blue,
  urlcolor=cyan
}

\newcommand{\R}{\mathbb R}
\newcommand{\Z}{\mathbb Z}
\newcommand{\Sph}{\mathbb S}
\newcommand{\Ccal}{\mathcal C}
\newcommand{\Hcal}{\mathcal H}
\newcommand{\Pp}{\mathbf P}
\newcommand{\Ee}{\mathbf E}
\newcommand{\Vol}{\operatorname{Vol}}

\newtheorem{theorem}{Theorem}[section]
\newtheorem{proposition}[theorem]{Proposition}
\newtheorem{lemma}[theorem]{Lemma}
\newtheorem{corollary}[theorem]{Corollary}
\theoremstyle{definition}
\newtheorem{definition}[theorem]{Definition}
\newtheorem{example}[theorem]{Example}
\newtheorem{remark}[theorem]{Remark}

\begin{document}

\title[Brownian Loops and Asymptotic Cycles]
{Brownian Loops, Singular Homology, and Asymptotic Cycles}

\author[Alberto Verjovsky]{Alberto Verjovsky}
\address{Instituto de Matem\'aticas\\
Universidad Nacional Aut\'onoma de M\'exico\\
Apartado Postal 273, Administraci\'on de Correos \#3\\
C.P. 62251 Cuernavaca, Morelos, M\'exico}
\email{alberto@matcuer.unam.mx}

\author[Ricardo F. Vila-Freyer]{Ricardo F. Vila-Freyer}
\address{CIMAT\\
Apdo. Postal 402, Guanajuato, Gto., C.P. 36000, M\'exico}
\email{vila@cimat.mx}

\date{\today}

\keywords{Brownian motion, asymptotic cycles, singular homology, stochastic currents, large deviations, Hodge theory}
\subjclass[2020]{60J65, 58J65, 57R19, 60F05}

\begin{abstract}
Let $M$ be a closed connected Riemannian manifold.  A continuous
semimartingale segment can be closed by a Borel family of paths of uniformly
bounded length, producing a singular homology class
$H_t\in H_1(M;\R)$.  For every linear choice of smooth closed representatives
of $H^1(M;\R)$, the corresponding Stratonovich homology differs from $H_t$ by
a uniformly bounded term, almost surely and uniformly in time.  The classes
$H_t$ are additive under time shift up to a uniformly bounded error.

For Brownian motion this comparison yields the Gaussian central limit theorem
with covariance given by the normalized Hodge inner product, a functional
central limit theorem for the polygonal interpolation of the closed homology
classes, and the almost sure limit $H_t/t\to0$.  For elliptic diffusions, the
large-deviation principle of Galkin--Mariani passes to $H_T/T$ with the same
rate function.  The construction uses Schwartzman's closing procedure and is independent, at these asymptotic scales, of the chosen bounded closing family.
\end{abstract}

\maketitle
\tableofcontents

\section{Introduction}

Schwartzman's theory of asymptotic cycles associates homology classes to long pieces of trajectories of a measure-preserving flow \cite{Schwartzman}.  If $f_t:M\to M$ is a flow and one closes the orbit segment
\[
   \{f_s(x):0\le s\le T\}
\]
by a path of bounded length, the resulting cycle can often be divided by $T$ and made to converge in $H_1(M;\R)$.  The limit records the average motion of the orbit in homology.

The same closing construction applies to a continuous semimartingale.  Here a continuous semimartingale on a manifold is understood in the standard chartwise sense: in local coordinates it is the sum of a continuous local martingale and a continuous finite-variation process; see \cite{Emery}.  Closing a segment by a path of uniformly bounded length gives an ordinary singular homology class.  Its pairing with a cohomology class differs by a uniformly bounded term from the Stratonovich integral of any closed representative of that class.  Brownian motion is the main example: on a closed Riemannian manifold its homological fluctuations occur at scale $\sqrt t$, whereas the linear Schwartzman normalization tends to zero.

Stochastic line integrals and the homological behavior of diffusions have been studied by Ikeda--Manabe \cite{IkedaManabe}, Manabe \cite{Manabe}, Ochi \cite{Ochi}, and Kuwada \cite{Kuwada}.  Watanabe considered Brownian winding on Riemann surfaces \cite{Watanabe}.  Geng and Iyer proved Gaussian limit laws for abelianized winding of reflected Brownian motion on compact manifolds with boundary \cite{GengIyer}, and Galkin and Mariani studied large deviations and rigidity for random homology of diffusion processes \cite{GalkinMariani}.  The construction used here keeps Schwartzman's closing procedure and compares the resulting singular homology classes with the homology defined by stochastic integration.

Brownian motion also arises from random geodesic motion.  J{\o}rgensen proved an invariance principle for geodesic random walks \cite{Jorgensen}, and Pinsky constructed an isotropic transport process on the tangent bundle whose transition semigroup converges, after rescaling, to the Brownian semigroup \cite{Pinsky}.  Before the limiting process is taken, the paths are made from ordinary geodesic arcs and closed $1$-forms are integrated along piecewise smooth curves, as in the construction of asymptotic cycles.

A related classical problem is the winding of planar Brownian motion.  Spitzer proved that the winding angle of planar Brownian motion around the origin, divided by $\log t$, converges to a Cauchy distribution \cite{Spitzer}; see also Durrett \cite{Durrett} and Pitman--Yor \cite{PitmanYor}.  The punctured plane leads to a different time change.  If
\[
   Z_t=R_t e^{i\Theta_t}
\]
is planar Brownian motion, then the angular process can be represented through a random clock involving
\[
   \int_0^t \frac{ds}{R_s^2},
\]
and $R_t$ is a two-dimensional Bessel process.  The corresponding Bessel clock is singular and leads to the logarithmic scaling in Spitzer's theorem.

The situation considered here is different.  Let $\omega$ be a harmonic $1$-form on a closed Riemannian manifold and $X_t$ Brownian motion.  The Stratonovich integral
\[
   J_t^\omega=\int_0^t \omega\circ dX_s
\]
is a continuous martingale and
\[
   \langle J^\omega\rangle_t
   =\int_0^t |\omega|^2(X_s)\,ds.
\]
Under the stationary Wiener measure the ergodic theorem gives
\[
   \frac{1}{t}\langle J^\omega\rangle_t
   \longrightarrow
   \frac{1}{\Vol(M)}\int_M |\omega|^2\,dv_g.
\]
Thus the clock is asymptotically linear.  The martingale central limit theorem then gives a Gaussian limit at scale $\sqrt t$.

For $\alpha,\beta\in H^1(M;\R)$ let $\omega_\alpha,\omega_\beta$ be their harmonic representatives and define
\begin{equation}\label{eq:Qintro}
   Q(\alpha,\beta)
   =\frac{1}{\Vol(M)}
   \int_M\langle\omega_\alpha,\omega_\beta\rangle\,dv_g.
\end{equation}
For Brownian motion, the rescaled singular homology converges to the centered Gaussian measure on $H_1(M;\R)$ whose covariance, under the canonical pairing with $H^1(M;\R)$, is $Q$.

At these scales the limits are independent of the bounded closing family, and the Schwartzman normalization vanishes:
\[
   \frac{H_t}{t}\longrightarrow0
   \qquad\text{almost surely}.
\]

Brownian motion is taken with infinitesimal generator $\frac12\Delta_g$; this fixes the constants in the covariance formula.  If Brownian motion is defined with generator $\Delta_g$, the covariance below is multiplied by $2$.

\section{Brownian motion and Wiener measure}

Throughout, $M$ is a closed connected smooth Riemannian manifold with metric $g$.  Compactness implies stochastic completeness.

Let
\[
   \Ccal(M)=C([0,\infty),M)
\]
with its Borel $\sigma$-algebra.  For $x\in M$, let $\Pp_x$ be Wiener measure for Brownian motion $X_t$ starting at $x$ and having generator $\frac12\Delta_g$.  Let
\[
   \mu=\frac{dv_g}{\Vol(M)}
\]
be normalized Riemannian volume and define
\begin{equation}\label{eq:stationaryWiener}
   \Pp_\mu(A)=\int_M \Pp_x(A)\,d\mu(x),
   \qquad A\subset\Ccal(M)\ \text{Borel}.
\end{equation}
Thus $X_0$ has distribution $\mu$.

For $s\ge0$ let $\theta_s:\Ccal(M)\to\Ccal(M)$ be the time shift
\[
   (\theta_sX)(t)=X_{s+t}.
\]
Since Riemannian volume is invariant for the heat semigroup, $\Pp_\mu$ is invariant under $\theta_s$.  Since $M$ is connected and the heat kernel is strictly positive, the stationary Brownian motion is ergodic; see \cite{Hsu} for the heat-kernel facts used here.  Consequently, for every $f\in L^1(M,\mu)$,
\begin{equation}\label{eq:ergodic}
   \frac1t\int_0^t f(X_s)\,ds
   \longrightarrow \int_M f\,d\mu
   \qquad \Pp_\mu\text{-almost surely}.
\end{equation}
Only bounded smooth functions will be used below.  The same ergodic averages hold when Brownian motion starts at a prescribed point.

\begin{proposition}\label{prop:ergodic-start}
For every $x\in M$ and every bounded Borel function $f:M\to\R$,
\begin{equation}\label{eq:ergodicx}
   \frac1t\int_0^t f(X_s)\,ds
   \longrightarrow \int_M f\,d\mu
   \qquad \Pp_x\text{-almost surely}.
\end{equation}
\end{proposition}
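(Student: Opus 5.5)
The plan is to transfer the $\Pp_\mu$-almost sure statement \eqref{eq:ergodic} to $\Pp_x$ using the Markov property at a positive time, together with the fact that the heat kernel is smooth and strictly positive on the closed manifold $M$.

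Fix a bounded Borel function $f$ and set
\[
   A=\Bigl\{\omega\in\Ccal(M):\ \frac1t\int_0^t f(\omega_s)\,ds\to\int_M f\,d\mu\Bigr\}.
\]
This set is Borel, since the limit can be taken along rational $t$ by continuity in $t$ of the bounded integral. Bounded Borel functions are in $L^1(M,\mu)$, so \eqref{eq:ergodic} gives $\Pp_\mu(A)=1$. The set $A$ is shift invariant: $\theta_1^{-1}A=A$. Indeed, for $\omega\in\Ccal(M)$,
\[
   \frac1t\int_0^t f(\omega_{s+1})\,ds-\frac1t\int_0^{t} f(\omega_s)\,ds
\]
is bounded by $2\|f\|_\infty/t\to0$. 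Hence, whether or not the convergence holds, it holds for $\omega$ exactly when it holds for $\theta_1\omega$.

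By the Markov property at time $1$,
\[
   \Pp_y(A)=\Pp_y(\theta_1^{-1}A)=\Ee_y\bigl[\Pp_{X_1}(A)\bigr]=\int_M p_1(y,z)\,\Pp_z(A)\,dv_g(z)
\]
for every $y\in M$. Integrating against $\mu$ gives $\int_M \Pp_z(A)\,d\mu(z)=\Pp_\mu(A)=1$. Since $0\le \Pp_z(A)\le1$, it follows that $\Pp_z(A)=1$ for $\mu$-almost every $z$. Now $p_1(x,\cdot)$ is a probability density with respect to $dv_g$, so $p_1(x,z)\,dv_g(z)$ is absolutely continuous with respect to $\mu$. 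Therefore
\[
   \Pp_x(A)=\int_M p_1(x,z)\,\Pp_z(A)\,dv_g(z)=\int_M p_1(x,z)\,dv_g(z)=1.
\]
Only absolute continuity of the law of $X_1$ under $\Pp_x$ is needed here; the strict positivity of the heat kernel is not used in this step.

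The main point needing care is the shift invariance $\theta_1^{-1}A=A$ together with the measurability of $A$. Boundedness of $f$ makes the invariance trivial, which is why the proposition is restricted to bounded Borel $f$. The remaining ingredients are the Markov property of the family $\{\Pp_y\}$ and the existence of a heat kernel density for $X_1$, both standard facts from \cite{Hsu}.
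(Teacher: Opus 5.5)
Your proof is correct and follows the paper's argument: you pass from $\Pp_\mu(A)=1$ to $\Pp_y(A)=1$ for $\mu$-almost every $y$, then use the Markov property at a positive time together with absolute continuity of the heat-kernel law of $X_1$ under $\Pp_x$. Your shift-invariance observation $\theta_1^{-1}A=A$, which uses the boundedness of $f$, does the same job as the paper's discarding of the initial segment $[0,\varepsilon]$. It also cleanly absorbs the end piece $\int_t^{t+\varepsilon}f(X_s)\,ds$, which the paper's write-up passes over.
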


\begin{proof}
Let $A_f$ be the event in \eqref{eq:ergodic}.  Since $\Pp_\mu(A_f)=1$,
\[
   \Pp_y(A_f)=1
\]
for $\mu$-almost every $y\in M$.  Fix $x\in M$ and $\varepsilon>0$.  The law of $X_\varepsilon$ under $\Pp_x$ has the heat-kernel density $p_\varepsilon(x,y)$ with respect to Riemannian volume and is therefore absolutely continuous with respect to $\mu$; for these standard facts about Brownian motion on a Riemannian manifold, see \cite{Hsu}.  By the Markov property,
\[
   \Pp_x(\theta_\varepsilon^{-1}A_f)=1.
\]
On this event,
\[
 \frac1t\int_\varepsilon^t f(X_s)\,ds
 \longrightarrow \int_M f\,d\mu.
\]
Since $f$ is bounded, the contribution of $[0,\varepsilon]$ divided by $t$ tends to zero.  This proves \eqref{eq:ergodicx}.
\end{proof}

For a smooth $1$-form $\omega$, write
\[
   \int_0^t\omega\circ dX_s.
\]
The Stratonovich integral satisfies the ordinary chain rule:
\begin{equation}\label{eq:chainrule}
   \int_0^t df\circ dX_s=f(X_t)-f(X_0)
\end{equation}
for $f\in C^\infty(M)$.  The intrinsic construction may be found in It\^o \cite{Ito1963}, Hsu \cite{Hsu}, and \`Emery \cite{Emery}.

With the sign convention $\delta df=-\Delta_g f$, the Stratonovich integral has the semimartingale decomposition
\begin{equation}\label{eq:stratito}
   \int_0^t\omega\circ dX_s
   =N_t^\omega-\frac12\int_0^t(\delta\omega)(X_s)\,ds,
\end{equation}
where $N_t^\omega$ is a continuous local martingale.  Its covariation with $N_t^\eta$ is
\begin{equation}\label{eq:crossvariation}
   \langle N^\omega,N^\eta\rangle_t
   =\int_0^t\langle\omega,\eta\rangle(X_s)\,ds.
\end{equation}
In particular, if $\omega$ is harmonic, then $d\omega=0$ and $\delta\omega=0$, so that
\begin{equation}\label{eq:harmmart}
   J_t^\omega:=\int_0^t\omega\circ dX_s
\end{equation}
is a continuous local martingale.  Since $M$ is compact, $\omega$ is bounded, and hence
\[
\mathbf E_x\langle J^\omega\rangle_t
\le t\,\|\omega\|_\infty^2<\infty
\qquad (x\in M).
\]
Therefore $J_t^\omega$ is a square-integrable martingale on every finite time interval, and
\begin{equation}\label{eq:qv}
   \langle J^\omega,J^\eta\rangle_t
   =\int_0^t\langle\omega,\eta\rangle(X_s)\,ds
\end{equation}
for harmonic $\omega,\eta$.

\section{Geodesic approximations of Brownian motion}\label{sec:geodesic-approx}

J{\o}rgensen considered random walks whose successive steps are geodesic segments and proved convergence, under diffusive rescaling and suitable assumptions on the distribution of the steps, to diffusion processes on the manifold; isotropic identically distributed steps give Brownian motion \cite{Jorgensen}.  Pinsky constructed an isotropic transport process on the tangent bundle of a complete Riemannian manifold.  On a compact manifold, after introducing a small parameter, its transition semigroup converges to the Brownian semigroup \cite{Pinsky}.  Between successive changes of direction the motion is geodesic.

Let $c:[0,t]\to M$ be a piecewise smooth path and close it by a path $\gamma_{c(t),c(0)}$.  Put
\[
   \Gamma(c,t)=c*\gamma_{c(t),c(0)}.
\]
For every smooth closed $1$-form $\omega$,
\begin{equation}\label{eq:broken-geodesic-pairing}
   \bigl\langle[\omega],[\Gamma(c,t)]\bigr\rangle
   =\int_c\omega+\int_{\gamma_{c(t),c(0)}}\omega.
\end{equation}
For a geodesic random walk, therefore, the homology of the closed path is computed by ordinary line integrals along its geodesic pieces and along the closing path.

The Stratonovich integral has the ordinary chain rule and agrees with the usual line integral on paths of finite variation.  It is also the integral obtained from the standard smooth approximations of stochastic paths; see \cite{Emery,Hsu}.  Formula \eqref{eq:broken-geodesic-pairing} is consequently the finite-variation form of the stochastic pairing used below.

Pinsky's construction takes place on the tangent bundle.  Between two changes of direction the tangent vector evolves by the geodesic flow; the random choice of new directions produces the diffusive limit on the base manifold.  Thus the closed Brownian paths considered below can be approached by closed broken geodesics, to which the usual construction of asymptotic cycles applies directly.

\section{Closing Brownian paths and the homology process}

The closing paths will be chosen with uniformly bounded length and Borel dependence on the endpoints.

\begin{lemma}[A measurable family of bounded closing paths]\label{lem:closing}
There is a Borel family
\[
   (x,y)\longmapsto\gamma_{x,y}:[0,1]\to M
\]
of piecewise smooth paths joining $x$ to $y$ and a constant $L<\infty$ such that
\[
   \operatorname{length}(\gamma_{x,y})\le L
\]
for all $x,y\in M$.
\end{lemma}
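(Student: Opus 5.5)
The plan is to build the family from a finite cover of $M$ by small geodesically convex balls together with a fixed finite set of connecting paths between the centers. Compactness gives the uniform length bound, and a Borel partition subordinate to the cover gives the measurability. I deliberately avoid minimizing geodesics $\gamma_{x,y}$: they are not unique on the cut locus, and although a Borel selection exists, a combinatorial construction is simpler and gives piecewise smoothness for free.

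\emph{Step 1 (local pieces).} Let $\iota>0$ be the convexity radius of the closed manifold $M$, and cover $M$ by finitely many balls $B_1,\dots,B_N$ of radius $r<\iota/2$ with centers $p_1,\dots,p_N$. For $x\in B_i$ let $\sigma_i(x)$ be the unique minimizing geodesic from $p_i$ to $x$, parametrized on $[0,1]$ by $s\mapsto\exp_{p_i}(s\exp_{p_i}^{-1}x)$. Since $\exp_{p_i}^{-1}$ is smooth on $B_i$, the map $x\mapsto\sigma_i(x)\in C([0,1],M)$ is continuous, and $\operatorname{length}\sigma_i(x)\le r$. Next define a Borel partition $A_1=B_1$ and $A_i=B_i\setminus(B_1\cup\dots\cup B_{i-1})$, and let $i(x)$ be the unique index with $x\in A_{i(x)}$. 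The function $x\mapsto i(x)$ is Borel.

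\emph{Step 2 (global connectors).} Since $M$ is connected, for each pair $(i,j)$ fix once and for all a piecewise smooth path $c_{ij}$ from $p_i$ to $p_j$; for instance, take a minimizing geodesic, which exists by Hopf--Rinow. Let $D=\max_{i,j}\operatorname{length}(c_{ij})<\infty$.

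\emph{Step 3 (assembly).} Define $\gamma_{x,y}$ as the concatenation $\overline{\sigma_{i(x)}(x)}*c_{i(x)i(y)}*\sigma_{i(y)}(y)$, reparametrized affinely onto thirds of $[0,1]$; here the bar denotes reversal. This path is piecewise smooth, joins $x$ to $y$, and has length at most $L=2r+D$. For measurability, observe that on each Borel rectangle $A_i\times A_j$ the map $(x,y)\mapsto\gamma_{x,y}\in C([0,1],M)$ is continuous, because it is built from the continuous maps $\sigma_i$ and $\sigma_j$ and the constant path $c_{ij}$. A map that is continuous on each piece of a finite Borel partition is Borel, so the family is Borel. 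If one also needs joint measurability of $(x,y,s)\mapsto\gamma_{x,y}(s)$, it follows because evaluation is continuous on $C([0,1],M)\times[0,1]$.

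The only delicate point is measurability across the boundaries of the cover. Continuity necessarily fails there, since the index $i(x)$ jumps. The partition argument in Step 3 handles this: it requires only Borel dependence, not continuity, and that is exactly what the lemma asks for.
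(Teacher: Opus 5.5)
Your proof is correct and follows essentially the same construction as the paper: a finite cover by geodesically convex sets with base points $p_i$, the Borel partition $V_i=U_i\setminus\bigcup_{j<i}U_j$, fixed connectors $c_{ij}$, and the concatenation geodesic--connector--geodesic, with the length bound coming from finiteness. Your observation that $(x,y)\mapsto\gamma_{x,y}\in C([0,1],M)$ is continuous on each rectangle $A_i\times A_j$ makes explicit the Borel-measurability step that the paper only sketches.
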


\begin{proof}
Choose a finite cover $U_1,\ldots,U_m$ by geodesically convex open sets and points $p_i\in U_i$.  Define a Borel partition subordinate to this cover by
\[
V_1=U_1,\qquad
V_i=U_i\setminus\bigcup_{j<i}U_j,\quad 2\le i\le m.
\]
Then $V_i\subset U_i$ for every $i$.  For each pair $(i,j)$ choose once and for all a smooth path $c_{ij}$ from $p_i$ to $p_j$.  If $x\in V_i$ and $y\in V_j$, let $\gamma_{x,y}$ be the concatenation of the unique minimizing geodesic in $U_i$ from $x$ to $p_i$, the path $c_{ij}$, and the unique minimizing geodesic in $U_j$ from $p_j$ to $y$.  Inside each geodesically convex set the minimizing geodesic depends smoothly on its endpoints.  Hence the dependence on $(x,y)$ is Borel after restriction to the Borel pieces $V_i\times V_j$.  The lengths are uniformly bounded because there are only finitely many sets and finitely many paths $c_{ij}$.
\end{proof}

Fix such a family.  For any continuous path $X\in\Ccal(M)$ and $t>0$, let $\Gamma(X,t)$ be the closed continuous path obtained by following $X_s$, $0\le s\le t$, and then following $\gamma_{X_t,X_0}$ from $X_t$ back to $X_0$.  After a linear reparametrization of the two pieces, $\Gamma(X,t)$ is a singular $1$-cycle.  We define
\begin{definition}\label{def:Ht}
The \emph{Brownian homology process} is
\[
   H_t(X):=[\Gamma(X,t)]\in H_1(M;\R),
   \qquad t\ge0,
\]
with $H_0=0$.
\end{definition}

The class of a loop is integral; hence $H_t$ takes values in the image of
$H_1(M;\Z)/\operatorname{tors}$ inside $H_1(M;\R)$, a lattice when
$b_1(M)>0$.

The loop $\Gamma(X,t)$ defines a singular homology class because singular simplices are only required to be continuous; rectifiability is not required.  Its pairing with smooth de Rham classes will be expressed by stochastic integration.

Let $\langle\cdot,\cdot\rangle$ denote the canonical pairing
\[
   H^1(M;\R)\times H_1(M;\R)\longrightarrow\R.
\]
Equivalently, since the coefficients are in a field,
\[
   H_1(M;\R)\simeq H^1(M;\R)^*.
\]
This is the universal-coefficient pairing.  If $M$ is oriented, Poincar\'e duality gives instead an isomorphism $H^1(M;\R)\simeq H_{n-1}(M;\R)$.

For a smooth closed $1$-form $\omega$, define
\begin{equation}\label{eq:stochpair}
   I_t(\omega)
   =\int_0^t\omega\circ dX_s
    +\int_{\gamma_{X_t,X_0}}\omega.
\end{equation}
The second integral is an ordinary line integral along the piecewise smooth closing path.

\begin{lemma}[Circle-valued semimartingales and lifts]\label{lem:circlelift}
Let $\phi:M\to\Sph^1$ be smooth and let $X_s$, $0\le s\le t$, be a continuous semimartingale.  Put $Y_s=\phi(X_s)$ and let $\widetilde Y_s$ be any continuous lift of $Y_s$ to $\R$ under $u\mapsto e^{iu}$.  Then
\[
 \int_0^t \phi^*\!\left(\frac{d\theta}{2\pi}\right)\circ dX_s
 =\frac{\widetilde Y_t-\widetilde Y_0}{2\pi}.
\]
\end{lemma}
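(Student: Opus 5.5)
The strategy is to reduce the identity to the Stratonovich chain rule \eqref{eq:chainrule}, applied locally where a branch of the argument exists, and then patch the local pieces together along the path.

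First I cover $\Sph^1$ by two open arcs $A_1=\Sph^1\setminus\{1\}$ and $A_2=\Sph^1\setminus\{-1\}$. On each arc there is a smooth branch of the argument $\vartheta_k:A_k\to\R$ with $e^{i\vartheta_k}=\mathrm{id}$ and $d\vartheta_k=d\theta$. On the open set $\phi^{-1}(A_k)\subset M$ this gives
\[
\phi^*\!\left(\frac{d\theta}{2\pi}\right)=d\!\left(\frac{\vartheta_k\circ\phi}{2\pi}\right).
\]
The form is therefore locally exact. The functions $\vartheta_k\circ\phi$ are smooth only on $\phi^{-1}(A_k)$, but I will only evaluate them on path segments that stay in that set.

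Next I localize in time. Since $s\mapsto Y_s$ is continuous on the compact interval $[0,t]$, uniform continuity gives, pathwise, a finite sequence of stopping times $0=\tau_0<\tau_1<\dots<\tau_N=t$. Concretely, $\tau_{j+1}$ is the first time after $\tau_j$ at which $Y$ moves angular distance $\pi/2$ from $Y_{\tau_j}$, capped at $t$. On each interval $[\tau_j,\tau_{j+1}]$ the path $Y$ stays in an arc of length $<\pi$ around $Y_{\tau_j}$, hence inside some $A_{k_j}$ chosen by a Borel rule. Only finitely many $\tau_j$ occur, again by uniform continuity. On that interval I apply \eqref{eq:chainrule} to a smooth function $f_j\in C^\infty(M)$ that agrees with $\vartheta_{k_j}\circ\phi/2\pi$ on a neighbourhood of $\phi^{-1}(\overline{B})$, where $B$ is the arc; this uses a cutoff, and the Stratonovich integral of $df_j$ depends only on the form along the path, by locality of the integral up to stopping times. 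The chain rule yields
\[
\int_{\tau_j}^{\tau_{j+1}}\phi^*\!\left(\frac{d\theta}{2\pi}\right)\circ dX_s=\frac{\vartheta_{k_j}(Y_{\tau_{j+1}})-\vartheta_{k_j}(Y_{\tau_j})}{2\pi}.
\]

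Finally I identify the right-hand side with lift increments. On $[\tau_j,\tau_{j+1}]$ both $\vartheta_{k_j}(Y_s)$ and $\widetilde Y_s$ are continuous lifts of $Y_s$, so they differ by a continuous $2\pi\Z$-valued function, which is constant. Hence the increments agree: $\vartheta_{k_j}(Y_{\tau_{j+1}})-\vartheta_{k_j}(Y_{\tau_j})=\widetilde Y_{\tau_{j+1}}-\widetilde Y_{\tau_j}$. Summing over $j$ telescopes to $(\widetilde Y_t-\widetilde Y_0)/2\pi$. Independence of the choice of lift is automatic, because two lifts differ by a constant in $2\pi\Z$.

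The main obstacle is the localization step: justifying that the Stratonovich integral over a stochastic interval $[\tau_j,\tau_{j+1}]$ can be computed using a globally defined smooth $f_j$ that only agrees with the local primitive near the path. I will handle this with the standard locality property of Stratonovich integrals, namely that integrals of two forms agreeing on an open set coincide up to the exit time of that set \cite{Emery}. I will also use that the random partition has finitely many points almost surely, which holds pathwise by continuity.
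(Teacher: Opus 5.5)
Your proposal is correct and follows the paper's route: smooth local branches of the angle on proper arcs, the Stratonovich chain rule on subintervals where $Y$ stays in one arc, and telescoping to the increment of the lift; your stopping-time subdivision and locality argument make rigorous the step that the paper only sketches with a path-dependent subdivision. One small fix: with exit threshold exactly $\pi/2$ and only the arcs $\Sph^1\setminus\{\pm1\}$, the closed arc around $Y_{\tau_j}=\pm i$ has length $\pi$ (not $<\pi$) and contains both $1$ and $-1$, so no $A_k$ can be chosen at time $\tau_j$; any threshold below $\pi/2$, say $\pi/4$, repairs this, and then two fixed cut-off primitives (one per $A_k$) suffice.
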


\begin{proof}
The assertion is local on the circle.  On every proper open arc $J\subset\Sph^1$ choose a smooth branch $\vartheta:J\to\R$ of the angular coordinate.  If $Y_s\in J$ for $a\le s\le b$, the Stratonovich chain rule gives
\[
 \int_a^b \phi^*\!\left(\frac{d\theta}{2\pi}\right)\circ dX_s
 =\frac{\vartheta(Y_b)-\vartheta(Y_a)}{2\pi}.
\]
Cover $\Sph^1$ by finitely many such arcs.  By uniform continuity of $Y$ on $[0,t]$, subdivide the interval so that the image of each subinterval lies in one arc.  Summing the identities makes the intermediate angular increments telescope and gives the increment of the continuous lift.
\end{proof}

\begin{proposition}[Stochastic representation of the homology pairing]\label{prop:pairing}
Let $X$ be a continuous semimartingale on $M$.  For every smooth closed
$1$-form $\omega$,
\begin{equation}\label{eq:pairingidentity}
   I_t(\omega)=\langle[\omega],H_t\rangle
\end{equation}
almost surely, simultaneously for all $t\ge0$, after choosing the usual
continuous version of the Stratonovich integral.  In particular,
$I_t(\omega)$ depends only on the de Rham class of $\omega$.
\end{proposition}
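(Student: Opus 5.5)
We reduce to integral classes represented by circle-valued maps, apply Lemma~\ref{lem:circlelift}, and extend by linearity. Two preliminary reductions come first.

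\emph{Reduction to one representative per class.} Both sides of \eqref{eq:pairingidentity} are $\R$-linear in $\omega$; linearity of $I_t$ holds almost surely for all $t$ simultaneously, by continuity of versions. If $\omega'=\omega+df$ with $f\in C^\infty(M)$, the chain rule \eqref{eq:chainrule} gives $\int_0^t df\circ dX_s=f(X_t)-f(X_0)$. The ordinary line integral gives $\int_{\gamma_{X_t,X_0}}df=f(X_0)-f(X_t)$. Hence $I_t(\omega')=I_t(\omega)$ for all $t$ almost surely. This proves the last assertion, and it means we need the identity only for one closed representative of each class.

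\emph{Reduction to a finite basis.} Since $H^1(M;\R)$ is finite-dimensional, it suffices to treat a finite basis, which removes any uncountable-union issue. Choose the basis $[\omega_1],\dots,[\omega_b]$ from the image of $H^1(M;\Z)$; such classes span $H^1(M;\R)$. Each integral class $a$ is represented by $\phi^*(d\theta/2\pi)$ for some smooth $\phi:M\to\Sph^1$, using $[M,\Sph^1]\cong H^1(M;\Z)$ and smooth approximation of the classifying map. By the first reduction we may take $\omega=\phi^*(d\theta/2\pi)$.

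\emph{Main step.} Fix such $\phi$ and apply Lemma~\ref{lem:circlelift}. This gives, almost surely for all $t$,
\[
\int_0^t\omega\circ dX_s=\frac{\widetilde Y_t-\widetilde Y_0}{2\pi},
\]
where $\widetilde Y$ is a continuous lift of $\phi(X_s)$. The lemma is stated for fixed $t$, but both sides are continuous in $t$, so the identity holds for all rational $t$ and hence for all $t$. Next, continue the lift along $\phi\circ\gamma_{X_t,X_0}$; the line integral of $\omega$ along this smooth path equals the increment of the continued lift divided by $2\pi$. The concatenated loop $\phi\circ\Gamma(X,t)$ therefore has total lift increment $2\pi I_t(\omega)$, and this increment equals $2\pi\deg(\phi\circ\Gamma(X,t))$.

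On the other hand, for any continuous loop $\sigma$,
\[
\langle\phi^*[d\theta/2\pi],[\sigma]\rangle=\langle[d\theta/2\pi],\phi_*[\sigma]\rangle=\deg(\phi\circ\sigma).
\]
This follows from naturality of the Kronecker pairing and from the fact that the generator $[d\theta/2\pi]$ of $H^1(\Sph^1;\Z)$ evaluates on a loop to its winding number, which is computed by the continuous lift. This holds for merely continuous loops because the de Rham–singular isomorphism identifies the pairing with winding number for $\Sph^1$. Combining the two computations gives $I_t(\omega)=\langle[\omega],H_t\rangle$.

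\emph{Expected obstacle.} The delicate point is the identification of the de Rham pairing with the singular class of a non-rectifiable loop. Routing through $\Sph^1$ avoids integrating forms over continuous paths, since on the circle the pairing is literally the degree of the lift. The remaining checks are to ensure the null set is independent of $t$, which is handled by continuity and the finite basis, and that the smooth approximation of $\phi$ preserves the cohomology class, which holds because homotopic maps pull back $d\theta$ to cohomologous forms.
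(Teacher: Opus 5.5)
Your proof is correct and follows essentially the same route as the paper: you kill exact forms with the Stratonovich chain rule, reduce to integral classes represented by smooth circle maps $\phi$, apply Lemma~\ref{lem:circlelift} and continue the lift along the closing path to get $\deg(\phi\circ\Gamma(X,t))$, and then extend by linearity. Your extra care about a single null set for all $t$ (continuity in $t$ plus a finite basis) and about the pairing with non-rectifiable loops (naturality through $\Sph^1$) makes explicit points that the paper's proof leaves implicit.
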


\begin{proof}
First suppose that $\omega=df$.  By the Stratonovich chain rule,
\[
   \int_0^t df\circ dX_s=f(X_t)-f(X_0),
\]
while
\[
   \int_{\gamma_{X_t,X_0}}df=f(X_0)-f(X_t).
\]
Thus $I_t(df)=0$, so $I_t$ depends only on the de Rham class.

The image of $H^1(M;\Z)$ in $H^1(M;\R)$ is a full lattice and therefore spans $H^1(M;\R)$.  Let $\alpha\in H^1(M;\Z)$ and represent it by a smooth map $\phi:M\to\Sph^1$.  The form $\phi^*(d\theta/(2\pi))$ represents the real image of $\alpha$.  Lemma~\ref{lem:circlelift} identifies the stochastic integral over the Brownian part with the lifted angular increment of $\phi(X_s)$.  The ordinary integral over the closing path gives the remaining angular increment.  Their sum is therefore the total angular change of the closed loop $\phi\circ\Gamma(X,t)$, divided by $2\pi$, namely its degree.  Hence
\[
 I_t\!\left(\phi^*\frac{d\theta}{2\pi}\right)
 =\deg(\phi\circ\Gamma(X,t))
 =\langle\alpha,[\Gamma(X,t)]\rangle.
\]
The identity follows for integral classes and hence, by linearity, for all classes in $H^1(M;\R)$.
\end{proof}

\begin{corollary}[Measurability of the homology process]\label{cor:measurableHt}
For each fixed $t\ge0$, the map $X\mapsto H_t(X)\in H_1(M;\R)$ is measurable on Wiener path space.
\end{corollary}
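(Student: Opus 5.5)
The plan is to reduce measurability of the $H_1(M;\R)$-valued map to measurability of finitely many real coordinates, and then to use Proposition~\ref{prop:pairing} to identify each coordinate with a stochastic integral plus a line integral over the closing path.

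First, since $H_1(M;\R)\simeq H^1(M;\R)^*$ is finite dimensional, fix smooth closed $1$-forms $\omega_1,\ldots,\omega_b$ whose classes form a basis of $H^1(M;\R)$, with $b=b_1(M)$. The map
\[
   h\longmapsto\bigl(\langle[\omega_1],h\rangle,\ldots,\langle[\omega_b],h\rangle\bigr)
\]
is then a linear isomorphism $H_1(M;\R)\to\R^b$. It therefore suffices to show that each
\[
   X\longmapsto\langle[\omega_k],H_t(X)\rangle
\]
is measurable. By Proposition~\ref{prop:pairing}, this function equals $I_t(\omega_k)$ almost surely. So it is enough to show that $I_t(\omega_k)$ is measurable with respect to the completed $\sigma$-algebra; a map that agrees almost surely with a measurable one is measurable for the completion.

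The function $I_t(\omega)$ has two pieces, which I would handle separately.
\begin{enumerate}
\item The Stratonovich integral $\int_0^t\omega\circ dX_s$ is, by construction, a limit in probability of Riemann-type sums. Each such sum is a Borel function of finitely many coordinates $X_{s_i}$, computed chartwise. Passing to an almost surely convergent subsequence, the integral agrees a.s.\ with a measurable function of the path.
\item The closing term is $F(X_t,X_0)$, where
\[
   F(x,y)=\int_{\gamma_{x,y}}\omega .
\]
On each piece $V_i\times V_j$ from Lemma~\ref{lem:closing}, $F$ is the sum of a constant $\int_{c_{ij}}\omega$ and two line integrals along minimizing geodesics in convex sets. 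These line integrals depend smoothly, hence continuously, on the endpoints. So $F$ is Borel, and composing it with the continuous evaluation $X\mapsto(X_t,X_0)$ gives a measurable function.
\end{enumerate}

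The main obstacle is that $H_t(X)$ is defined pathwise, whereas the integral is defined only almost surely. One could alternatively try a purely pathwise argument, avoiding stochastic integration altogether. Approximate $X$ on $[0,t]$ by the polygonal path through $X_{kt/n}$, built from short geodesics between consecutive points, which exist once the mesh is below the injectivity radius. Close this polygon with the same family $\gamma$. For large $n$ the resulting loop is homotopic to $\Gamma(X,t)$ by uniform continuity, so its homology class stabilizes. Each approximant is a Borel function of finitely many coordinates, and $H_t$ is then a pointwise limit of measurable maps, hence genuinely Borel. I would present this pathwise argument if full Borel measurability, rather than measurability for the completion, is wanted.
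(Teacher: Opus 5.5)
Your main argument is essentially the paper's proof. You reduce to the finitely many coordinates $\langle[\omega_k],H_t\rangle$, use Proposition~\ref{prop:pairing} to split each into a Stratonovich integral plus the closing term $F(X_t,X_0)$, and check that $F$ is Borel piece by piece on the sets $V_i\times V_j$. The paper uses a basis of integral classes, which makes no difference here. You are right that, because the pairing identity holds only almost surely, this route gives measurability only for the completion of the path-space $\sigma$-algebra under $\Pp_x$ (or $\Pp_\mu$). The paper's proof passes over this point.

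Your pathwise alternative is a genuinely different route and is stronger. It avoids stochastic integration entirely and shows that $H_t$ is a pointwise, eventually constant, limit of Borel maps. That gives honest Borel measurability on $\Ccal(M)$, valid at once for every starting point and indeed for any law on path space. It is essentially the broken-geodesic picture of Section~\ref{sec:geodesic-approx} turned into a proof. Three routine points are needed to make it airtight:
\begin{enumerate}
\item What must be small is the spatial step $\max_k d(X_{kt/n},X_{(k+1)t/n})$, not the time mesh $t/n$. So define the $n$-th approximant on the Borel set where this step is below the convexity radius, and set it to $0$ elsewhere.
\item The class of the polygon should be computed through its pairings $\sum_k\int_{\sigma_k}\omega_j+F(X_t,X_0)$. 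These are Borel in the vertices because the minimizing geodesic $\sigma_k$ depends continuously on its endpoints when their distance is below $\operatorname{inj}(M)$.
\item The homotopy claim holds because, for large $n$ (depending on the path, by uniform continuity), each piece $X|_{[kt/n,(k+1)t/n]}$ lies in a strongly convex ball around $X_{kt/n}$. There it deforms onto $\sigma_k$ rel endpoints along minimizing geodesics.
\end{enumerate}
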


\begin{proof}
Choose a basis of $H^1(M;\R)$ consisting of real images of integral classes,
say $\alpha_1,\ldots,\alpha_{b_1}$.  By Proposition~\ref{prop:pairing}, each coordinate
\[
\langle\alpha_j,H_t\rangle
\]
is the sum of a Stratonovich stochastic integral and the ordinary integral over the Borel family of closing paths.  On each Borel piece $V_i\times V_j$, the two geodesic parts of the closing path depend smoothly on the endpoints and the middle path $c_{ij}$ is fixed; hence
$(x,y)\mapsto\int_{\gamma_{x,y}}\omega$ is Borel for every smooth $1$-form $\omega$.
Thus these coordinates are measurable and determine $H_t$ in the finite-dimensional space $H_1(M;\R)$.
\end{proof}

\section{Closed singular cycles and stochastic homology}\label{sec:realization}

The construction above applies to an arbitrary continuous semimartingale.

Let $Z^1(M)$ denote the vector space of smooth closed $1$-forms, and let
\[
   \xi:H^1(M;\R)\longrightarrow Z^1(M)
\]
be a linear map such that $\xi_\alpha$ is a smooth closed representative of
$\alpha$ for every $\alpha\in H^1(M;\R)$.  For a continuous semimartingale
$X$ define $\mathcal J_t^\xi\in H_1(M;\R)$ by
\begin{equation}\label{eq:Jxi}
   \langle\alpha,\mathcal J_t^\xi\rangle
   =\int_0^t \xi_\alpha\circ dX_s .
\end{equation}
The map $\alpha\mapsto\xi_\alpha$ is linear, so this defines an element
of $H^1(M;\R)^*\simeq H_1(M;\R)$.

\begin{theorem}[Uniform comparison]\label{thm:bounded-realization}
Let $X$ be any continuous semimartingale on the closed manifold $M$, and let
$H_t$ be obtained by closing $X_{[0,t]}$ with the family of Lemma~\ref{lem:closing}.
For every linear choice $\xi$ of smooth closed representatives there is a
constant $C_\xi<\infty$, depending only on $\xi$ and on the closing family,
such that, outside one null set,
\begin{equation}\label{eq:bounded-realization}
   \|H_t-\mathcal J_t^\xi\|\le C_\xi
   \qquad\text{for every }t\ge0.
\end{equation}
\end{theorem}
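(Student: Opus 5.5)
Fix a basis $\alpha_1,\ldots,\alpha_b$ of $H^1(M;\R)$ and compare the two classes coordinatewise; this suffices because $H_1(M;\R)\simeq H^1(M;\R)^*$ is finite dimensional and all norms on it are equivalent. For each $\alpha_j$, Proposition~\ref{prop:pairing} applied to the smooth closed form $\omega_j:=\xi_{\alpha_j}$ gives, outside a null set $N_j$ and simultaneously for all $t\ge0$,
\[
   \langle\alpha_j,H_t\rangle
   =\int_0^t\xi_{\alpha_j}\circ dX_s+\int_{\gamma_{X_t,X_0}}\xi_{\alpha_j}.
\]
By \eqref{eq:Jxi}, the first term is exactly $\langle\alpha_j,\mathcal J_t^\xi\rangle$. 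Hence, outside the single null set $N=\bigcup_j N_j$, the difference is
\[
   \langle\alpha_j,H_t-\mathcal J_t^\xi\rangle=\int_{\gamma_{X_t,X_0}}\xi_{\alpha_j}
   \qquad\text{for all }t\ge0 .
\]
This uses only finitely many forms, so only finitely many null sets are discarded. No structure of $X$ beyond being a continuous semimartingale enters.

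The remaining estimate is deterministic. Lemma~\ref{lem:closing} bounds the length of every closing path by $L$, and each $\xi_{\alpha_j}$ is a smooth form on the compact manifold $M$, so its pointwise norm $\|\xi_{\alpha_j}\|_\infty$ is finite. Therefore
\[
   \Bigl|\int_{\gamma_{x,y}}\xi_{\alpha_j}\Bigr|\le L\,\|\xi_{\alpha_j}\|_\infty
\]
for all $x,y\in M$. In particular this holds at $(x,y)=(X_t,X_0)$ for every $t$ and every path.

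To conclude, choose any norm on $H_1(M;\R)$, for instance $\|h\|=\max_j|\langle\alpha_j,h\rangle|$, or any norm equivalent to it up to a fixed constant. With this choice, $C_\xi=c\,L\max_j\|\xi_{\alpha_j}\|_\infty$ depends only on $\xi$ (through the chosen basis) and on the closing family, as claimed.

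The only delicate point is the phrase ``outside one null set, for every $t$''. This relies on the simultaneity in $t$ asserted in Proposition~\ref{prop:pairing}, that is, on using the continuous version of the Stratonovich integral. Linearity of $\xi$ then ensures that $\mathcal J_t^\xi$ is a well-defined element of $H_1(M;\R)$ once its values on the basis are fixed. Since the Stratonovich integral is linear in the form up to indistinguishability, and we fixed a basis, the identity extends to all $\alpha$ at once without further exceptional sets.
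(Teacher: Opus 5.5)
Your proposal is correct and follows the paper's proof: apply Proposition~\ref{prop:pairing} to $\xi_\alpha$ so that the difference reduces to the closing integral $\int_{\gamma_{X_t,X_0}}\xi_\alpha$, bound that by $L\|\xi_\alpha\|_\infty$, and pass to a norm on $H_1(M;\R)$. Your fixed basis with the max-coordinate norm is the coordinate form of the paper's bound $\|\xi_\alpha\|_\infty\le K_\xi\|\alpha\|_*$ combined with the dual norm, and because it only discards finitely many null sets it makes the ``outside one null set'' claim slightly more explicit than the paper's ``for every $\alpha$'' phrasing.
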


\begin{proof}
For every $\alpha\in H^1(M;\R)$, Proposition~\ref{prop:pairing} applied to the
closed form $\xi_\alpha$ gives
\begin{equation}\label{eq:bounded-realization-pair}
   \langle\alpha,H_t-\mathcal J_t^\xi\rangle
   =\int_{\gamma_{X_t,X_0}}\xi_\alpha .
\end{equation}
Choose a norm $\|\cdot\|_*$ on $H^1(M;\R)$.  Since the space is finite
dimensional and $\xi$ is linear, there is $K_\xi<\infty$ such that
\[
   \|\xi_\alpha\|_\infty\le K_\xi\|\alpha\|_*
\]
for all $\alpha$.  Hence
\[
   |\langle\alpha,H_t-\mathcal J_t^\xi\rangle|
   \le L K_\xi\|\alpha\|_* .
\]
Taking the dual norm on $H_1(M;\R)$ proves \eqref{eq:bounded-realization}.
\end{proof}

The stochastic homology is additive under time shift.  The closed singular classes satisfy the corresponding identity up to a uniformly bounded error.

\begin{corollary}[Additivity up to bounded error]\label{cor:quasicocycle}
For every continuous semimartingale path and all $s,t\ge0$,
\begin{equation}\label{eq:quasicocycle}
 \bigl\|H_{s+t}(X)-H_s(X)-H_t(\theta_sX)\bigr\|\le 3C_\xi .
\end{equation}
The bound is independent of $s$ and $t$.
\end{corollary}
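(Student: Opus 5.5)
The plan is to reduce the statement to Theorem~\ref{thm:bounded-realization}, applied three times, together with the exact additivity of the Stratonovich homology $\mathcal J^\xi$ under time shift. Fix a linear choice $\xi$ of smooth closed representatives and write
\[
 H_{s+t}(X)-H_s(X)-H_t(\theta_sX)
 =\bigl(H_{s+t}(X)-\mathcal J^\xi_{s+t}(X)\bigr)
 -\bigl(H_s(X)-\mathcal J^\xi_s(X)\bigr)
 -\bigl(H_t(\theta_sX)-\mathcal J^\xi_t(\theta_sX)\bigr)
 +\Delta,
\]
where
\[
\Delta=\mathcal J^\xi_{s+t}(X)-\mathcal J^\xi_s(X)-\mathcal J^\xi_t(\theta_sX).
\]
Each of the three bracketed terms has norm at most $C_\xi$ by \eqref{eq:bounded-realization}. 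For the first two this is immediate. For the third we use that $\theta_sX$ is again a continuous semimartingale, with respect to the shifted filtration, and it is closed by the same family of Lemma~\ref{lem:closing}. The constant $C_\xi=LK_\xi$ depends only on $\xi$ and on the closing family, not on the process, so the same bound holds.

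The remaining step is to show $\Delta=0$. Pairing with $\alpha$ turns this into the additivity of the Stratonovich integral over adjacent intervals:
\[
\int_0^{s+t}\xi_\alpha\circ dX_r-\int_0^{s}\xi_\alpha\circ dX_r=\int_s^{s+t}\xi_\alpha\circ dX_r=\int_0^t\xi_\alpha\circ d(\theta_sX)_r .
\]
The first equality is linearity of the integral in the interval. The second is the shift identity, which follows from the Riemann-sum (midpoint) approximation of the Stratonovich integral, or from the chartwise definition, since shifting time commutes with those approximations. Since $\langle\alpha,\Delta\rangle=0$ for every $\alpha$ and $H^1(M;\R)^*\simeq H_1(M;\R)$, we get $\Delta=0$. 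The triangle inequality then gives the bound $3C_\xi$.

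I expect the main obstacle to be the almost-sure, path-by-path bookkeeping rather than any estimate. The statement reads ``for every continuous semimartingale path'', so the argument should work on a single full-measure set on which all three comparisons hold simultaneously for all $s,t$. One clean route sidesteps stochastic integrals altogether, because the claim concerns only $H$. The difference $H_{s+t}(X)-H_s(X)-H_t(\theta_sX)$ is represented by a cycle that concatenates the closing paths $\gamma_{X_{s+t},X_0}$, $\gamma_{X_0,X_s}$ reversed and $\gamma_{X_{s+t},X_s}$ reversed, since the path pieces of $X$ cancel in homology. Its pairing with $\alpha$ is then bounded by $3LK_\xi\|\alpha\|_*$ directly from the proof of Theorem~\ref{thm:bounded-realization}. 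This gives the bound $3C_\xi$ deterministically for every path, and I would present it as the primary argument, with the $\mathcal J^\xi$ decomposition as the motivating remark.
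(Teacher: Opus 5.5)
Your proposal is correct. Its first half coincides with the paper's proof: the exact additivity $\mathcal J^\xi_{s+t}(X)=\mathcal J^\xi_s(X)+\mathcal J^\xi_t(\theta_sX)$, followed by three applications of Theorem~\ref{thm:bounded-realization}.

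The argument you propose to make primary is a genuinely different route, and it is a good one. You work directly with singular chains. You subdivide $X|_{[0,s+t]}$ at time $s$, so the pieces of the path cancel. What remains is the class of the loop formed by $\gamma_{X_{s+t},X_0}$, then the reverse of $\gamma_{X_s,X_0}$, then the reverse of $\gamma_{X_{s+t},X_s}$. There is an index slip here: the middle arc must be the reverse of the closing path $\gamma_{X_s,X_0}$ of $\Gamma(X,s)$, not of $\gamma_{X_0,X_s}$. With your indices the three arcs do not concatenate, and the chain would not represent the difference. This loop is piecewise smooth of length at most $3L$. Its pairing with $[\xi_\alpha]$ is therefore an ordinary line integral, bounded by $3LK_\xi\|\alpha\|_* = 3C_\xi\|\alpha\|_*$.

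This route gives the inequality deterministically for every continuous path. It needs no semimartingale hypothesis, no Stratonovich integral, and no null-set bookkeeping. The paper's route does need some care on that last point. Applying Theorem~\ref{thm:bounded-realization} to $\theta_sX$ for uncountably many $s$ is only legitimate once $\mathcal J^\xi_t(\theta_sX)$ is defined as the increment of a single continuous version of $\mathcal J^\xi(X)$. Your argument also shows that the defect is an integral class carried by a loop of length at most $3L$.

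What the paper's route offers is the conceptual picture: $H$ is a bounded perturbation of the exactly additive cocycle $\mathcal J^\xi$. That fits the comparison theme of the section, but it is not needed for the bound itself.
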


\begin{proof}
Stratonovich integration is additive under concatenation, hence
\[
   \mathcal J_{s+t}^\xi(X)
   =\mathcal J_s^\xi(X)+\mathcal J_t^\xi(\theta_sX).
\]
Subtract this identity from the left-hand side of \eqref{eq:quasicocycle} and
apply Theorem~\ref{thm:bounded-realization} to the three terms.
\end{proof}

\begin{corollary}[Independence of the closing family]\label{cor:closing-universality}
If $H_t$ and $\widetilde H_t$ are obtained from two Borel closing families of
uniformly bounded length, then
\[
   \sup_{t\ge0}\|H_t-\widetilde H_t\|<\infty .
\]
Every asymptotic statement unchanged by a bounded perturbation is therefore independent of the chosen closing family.
\end{corollary}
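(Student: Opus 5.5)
The plan is to compare both closed classes with one and the same stochastic homology $\mathcal J_t^\xi$ and use the triangle inequality. Fix any linear choice $\xi$ of smooth closed representatives. Theorem~\ref{thm:bounded-realization} was proved for the family of Lemma~\ref{lem:closing}. However, its proof uses only two properties of the closing family. The first is that the family is Borel with uniformly bounded length $L$. The second is the pairing identity of Proposition~\ref{prop:pairing}, and that identity holds for every Borel family of piecewise smooth closing paths.

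So the first step is to check that Proposition~\ref{prop:pairing} and Lemma~\ref{lem:circlelift} apply verbatim to a second family $\widetilde\gamma_{x,y}$ of length at most $\widetilde L$. Nothing in those proofs used the specific geodesic construction. The only point needing care is that $(x,y)\mapsto\int_{\widetilde\gamma_{x,y}}\omega$ is Borel, which is needed for measurability and for the null set to be well defined. I take this as part of the hypothesis "Borel closing family", in the same way as in Corollary~\ref{cor:measurableHt}.

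With this in hand, the estimate in the proof of Theorem~\ref{thm:bounded-realization} gives two bounds outside a single null set. For every $t\ge0$ and every $\alpha$,
\[
 |\langle\alpha,H_t-\mathcal J_t^\xi\rangle|\le LK_\xi\|\alpha\|_*,
 \qquad
 |\langle\alpha,\widetilde H_t-\mathcal J_t^\xi\rangle|\le \widetilde LK_\xi\|\alpha\|_*.
\]
Taking the dual norm and applying the triangle inequality yields
\[
\sup_{t\ge0}\|H_t-\widetilde H_t\|\le (L+\widetilde L)K_\xi .
\]

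A more direct route avoids stochastic integrals entirely. The two loops $\Gamma(X,t)$ and $\widetilde\Gamma(X,t)$ share the segment $X_{[0,t]}$. Hence their difference is homologous to the loop $\gamma_{X_t,X_0}*\widetilde\gamma_{X_t,X_0}^{-1}$, whose length is at most $L+\widetilde L$. Pairing this loop with an integral class represented by a closed form then gives the same bound. I would mention this argument as a remark.

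The final sentence of the statement is then immediate. Any quantity that is unchanged under perturbations of $H_t$ bounded uniformly in $t$ takes the same value for both families. Examples are the limits of $H_t/\sqrt t$ and $H_t/t$, and large-deviation rate functions at scale $T$.

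The only real obstacle is the bookkeeping in the first step. One must ensure that the null set and the measurability claim are handled uniformly in $t$ for the second family. I would settle this by noting that the Stratonovich integrals can be taken with continuous versions simultaneously for a finite basis of $\alpha$'s, exactly as in Proposition~\ref{prop:pairing}.
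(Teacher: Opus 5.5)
Your main argument is correct and is the paper's own: apply Theorem~\ref{thm:bounded-realization} with the same section $\xi$ to both closing families, noting that its proof uses only the length bound and Proposition~\ref{prop:pairing}, and then use the triangle inequality to obtain the bound $(L+\widetilde L)K_\xi$. Your alternative remark is also valid: $H_t-\widetilde H_t$ is the class of the piecewise smooth loop $\gamma_{X_t,X_0}*\widetilde\gamma_{X_t,X_0}^{-1}$, of length at most $L+\widetilde L$, and this gives the bound for every continuous path with no null set or measurability issue at all.
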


\begin{proof}
Apply Theorem~\ref{thm:bounded-realization} to both closing rules and the same
section $\xi$.
\end{proof}

For completeness, a family of random variables $Y_t$ in a finite-dimensional vector space is said to satisfy a large deviation principle with speed $t$ and rate function $I$ if, for every open set $O$ and every closed set $F$,
\[
 -\inf_{y\in O} I(y)
 \le \liminf_{t\to\infty}\frac1t\log\mathbf P(Y_t\in O),
 \qquad
 \limsup_{t\to\infty}\frac1t\log\mathbf P(Y_t\in F)
 \le -\inf_{y\in F} I(y).
\]
The rate function is called \emph{good} when each sublevel set $\{I\le a\}$ is compact.  We use the terminology and results of Dembo--Zeitouni \cite{DemboZeitouni}.

\begin{proposition}[Consequences under normalization]\label{prop:transfer}
Let $a_t>0$ with $a_t\to\infty$.  Then
\begin{equation}\label{eq:transfer}
   \left\|\frac{H_t}{a_t}-\frac{\mathcal J_t^\xi}{a_t}\right\|
   \le \frac{C_\xi}{a_t}\longrightarrow0
\end{equation}
almost surely.  Hence the two normalized families have the same limits in
probability, in distribution, and almost surely whenever one of these limits
exists.  Moreover, if $\mathcal J_t^\xi/t$ satisfies a large deviation
principle with speed $t$ and good rate function $I$, then $H_t/t$ satisfies
the same large deviation principle with the same rate function.
\end{proposition}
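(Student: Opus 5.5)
The inequality \eqref{eq:transfer} is immediate from Theorem~\ref{thm:bounded-realization}. Outside the single null set given there, $\|H_t-\mathcal J_t^\xi\|\le C_\xi$ for every $t\ge0$. Dividing by $a_t>0$ gives the bound $C_\xi/a_t$, which tends to $0$ because $a_t\to\infty$. The convergence is therefore deterministic on a full-measure event, uniformly over sample paths, and in particular almost sure.

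For the limit statements I will write $D_t=(H_t-\mathcal J_t^\xi)/a_t$, so that $\|D_t\|\le C_\xi/a_t$ surely on the good event.
\begin{itemize}
\item \emph{Almost sure limits.} If $\mathcal J_t^\xi/a_t\to Y$ almost surely, then $H_t/a_t=\mathcal J_t^\xi/a_t+D_t\to Y$ on the intersection of the two full-measure events. The argument is symmetric in the two families.
\item \emph{Limits in probability.} Since $\|D_t\|\to0$ almost surely, $D_t\to0$ in probability, and sums of families converging in probability converge in probability.
\item \emph{Limits in distribution.} Here I will invoke Slutsky's lemma: if $U_t\Rightarrow\nu$ and $V_t-U_t\to0$ in probability, then $V_t\Rightarrow\nu$. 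This holds in the finite-dimensional space $H_1(M;\R)$, and can be checked directly with bounded Lipschitz test functions $h$, using $|\Ee h(V_t)-\Ee h(U_t)|\le \operatorname{Lip}(h)\,C_\xi/a_t$.
\end{itemize}
The only measurability input needed is Corollary~\ref{cor:measurableHt}, which makes $H_t$ a random variable.

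For the large deviation statement, take $a_t=t$. The two families $H_t/t$ and $\mathcal J_t^\xi/t$ differ by at most $C_\xi/t$ almost surely, so they are exponentially equivalent in the sense of Dembo--Zeitouni. For every $\delta>0$ the event $\{\|H_t/t-\mathcal J_t^\xi/t\|>\delta\}$ is null once $t>C_\xi/\delta$, so $\limsup_t\frac1t\log\Pp(\cdot)=-\infty$. Since the underlying space is a finite-dimensional normed space (in particular metric) and $I$ is good, the theorem on exponentially equivalent families (Dembo--Zeitouni, Theorem~4.2.13) transfers the large deviation principle with the same rate function. Alternatively, I can argue directly with neighbourhoods. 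For open $O$ and a point $y\in O$ with a ball $B(y,2\delta)\subset O$, I have $\Pp(H_t/t\in O)\ge\Pp(\mathcal J_t^\xi/t\in B(y,\delta))$ for large $t$. For closed $F$, $\Pp(H_t/t\in F)\le\Pp(\mathcal J_t^\xi/t\in F^\delta)$, where $F^\delta$ is the closed $\delta$-neighbourhood of $F$. Letting $\delta\downarrow0$ gives the upper bound, using $\inf_{F^\delta}I\to\inf_F I$, which holds for good rate functions.

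I expect the only step needing any care to be this last limit $\delta\downarrow0$ in the closed-set upper bound, which is exactly where goodness of $I$ is used. It follows from compactness of the sublevel sets: take points $y_\delta\in F^\delta$ with $I(y_\delta)$ bounded, extract a convergent subsequence with limit in $F$, and apply lower semicontinuity of $I$. Citing the Dembo--Zeitouni result avoids redoing this argument.
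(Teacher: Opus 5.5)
Your proof is correct and follows the paper's route. The bound is read off from Theorem~\ref{thm:bounded-realization}, and the large deviation transfer rests on the event $\{\|H_t/t-\mathcal J_t^\xi/t\|>\delta\}$ being null for $t>C_\xi/\delta$, so the two families are exponentially equivalent and Dembo--Zeitouni's Theorem~4.2.13 applies; your Slutsky (bounded-Lipschitz) argument and the direct open/closed-set check, using goodness of $I$ to get $\inf_{F^\delta}I\to\inf_F I$, just spell out details the paper leaves implicit.
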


\begin{proof}
For every $\varepsilon>0$,
\[
 \mathbf P\!\left(
 \left\|\frac{H_t}{t}-\frac{\mathcal J_t^\xi}{t}\right\|>\varepsilon
 \right)=0
\]
for all $t>C_\xi/\varepsilon$.  Thus the two families are exponentially equivalent.  
The standard exponential-equivalence theorem for large 
deviations gives the claim \cite[Theorem~4.2.13]{DemboZeitouni}.
\end{proof}

\begin{corollary}[Large deviations for closed singular cycles]\label{cor:GM-LDP}
Let $X_t$ be the diffusion on $M$ with generator
\[
   Lf=\frac12\Delta_g f+\langle b,df\rangle,
\]
where $b$ is a smooth vector field.  Let $G$ be the good rate function for
random homology defined by Galkin--Mariani \cite{GalkinMariani}.  Then
\[
   \frac{H_T}{T}
\]
satisfies a good large deviation principle with speed $T$ and the same rate function
$G$.  This conclusion is independent of the uniformly bounded closing rule.
\end{corollary}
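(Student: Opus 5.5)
The corollary will follow from Proposition~\ref{prop:transfer} once the Galkin--Mariani theorem is written as a large deviation principle for a stochastic homology of the form $\mathcal J_T^\xi/T$. The plan has three steps: identify the Galkin--Mariani object, match it with $\mathcal J_T^\xi$ up to a uniformly bounded error, and then transfer.

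First I would recall the random homology used by Galkin--Mariani. For the diffusion $X$ with generator $L$, it is the element of $H_1(M;\R)\simeq H^1(M;\R)^*$ defined by pairing $\alpha$ with $\frac1T\int_0^T\omega_\alpha\circ dX_s$. Here $\omega_\alpha$ is a fixed linear choice of closed representatives, for instance the harmonic ones. Their theorem states that this quantity satisfies a large deviation principle with speed $T$ and good rate function $G$, uniformly in the starting point or for a fixed start.

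Second, I would take $\xi$ to be exactly the section used by Galkin--Mariani, say $\xi_\alpha=\omega_\alpha$ harmonic. Then their random homology is literally $\mathcal J_T^\xi/T$ as in \eqref{eq:Jxi}. Suppose instead that they close paths, or use a different linear section $\xi'$. Two linear sections differ by $\xi_\alpha-\xi'_\alpha=df_\alpha$, with $f_\alpha$ depending linearly on $\alpha$ and bounded in $C^0$ by $K\|\alpha\|_*$ on the compact manifold $M$. The chain rule \eqref{eq:chainrule} then gives
\[
\bigl|\langle\alpha,\mathcal J_T^\xi-\mathcal J_T^{\xi'}\rangle\bigr|\le 2K\|\alpha\|_*
\]
for all $T$, outside one null set. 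This is the same bounded-perturbation mechanism as in Theorem~\ref{thm:bounded-realization}. The diffusion $X$ is a continuous semimartingale on $M$, so Theorem~\ref{thm:bounded-realization} applies to it.

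Third, I would invoke Proposition~\ref{prop:transfer} with $a_T=T$. The differences $H_T/T-\mathcal J_T^\xi/T$ are almost surely bounded by $C_\xi/T$. The two families are therefore exponentially equivalent, and Dembo--Zeitouni Theorem~4.2.13 transfers the principle with the same good rate function $G$. Independence of the closing rule follows from Corollary~\ref{cor:closing-universality}: any two bounded closing families give homology processes differing by a uniformly bounded amount, and the same exponential-equivalence argument applies.

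The main obstacle is the first step: checking that the Galkin--Mariani hypotheses and normalization match ours. Their generator $\frac12\Delta_g+b$, the choice of starting distribution, and the definition of the homology as a Stratonovich integral of closed forms all need to agree with the setup here. If their homology is defined through a different but linear choice of closed forms, the second step absorbs the discrepancy. If it is defined through a different integral (It\^o), the discrepancy is the term $\frac12\int\delta\omega\,ds$ from \eqref{eq:stratito}. That term is not bounded, so in that case I would redefine $\xi$ to be the harmonic section, for which $\delta\omega=0$ and the It\^o and Stratonovich integrals coincide.
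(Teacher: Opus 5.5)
Your proposal is correct and follows the paper's proof. You identify the Galkin--Mariani random homology with $\mathcal J_T^\xi/T$ for their linear Stratonovich section, transfer the large deviation principle to $H_T/T$ by the exponential equivalence of Proposition~\ref{prop:transfer}, and get independence of the closing rule from Corollary~\ref{cor:closing-universality}. Your extra change-of-section argument via $df_\alpha$ and the It\^o contingency are harmless but unnecessary: Theorem~\ref{thm:bounded-realization} already holds for every linear section, and Galkin--Mariani use the Stratonovich integral.
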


\begin{proof}
Galkin--Mariani define, after choosing a linear section
$c\mapsto\xi_c$ of closed representatives,
\[
   \langle h_T,c\rangle
   =\frac1T\int_0^T\xi_c(X_s)\circ dX_s
   =\left\langle\frac{\mathcal J_T^\xi}{T},c\right\rangle
\]
and prove (Remark~2.2 in \cite{GalkinMariani}) that $h_T$ satisfies a good large deviation principle with speed
$T$ and rate $G$, independently of the section.  Proposition~\ref{prop:transfer}
therefore transfers that principle to $H_T/T$.
\end{proof}

\begin{remark}
The rate function $G$ governs the large deviations of the singular homology classes obtained by closing long diffusion paths.  The same closing procedure appears in Schwartzman's theory of asymptotic cycles.
\end{remark}

\subsection{Harmonic circle maps and homology coordinates}

Let $\alpha\in H^1(M;\Z)$ and let $\omega_\alpha$ be the harmonic representative of its real cohomology class.  Since $\omega_\alpha$ has integral periods, there is a smooth map
\[
   \phi_\alpha:M\longrightarrow \Sph^1
\]
such that
\[
   \omega_\alpha=\phi_\alpha^*\!\left(\frac{d\theta}{2\pi}\right).
\]
This is the standard circle-valued realization of an integral cohomology class; see \cite{BairdWood}.  The map $\phi_\alpha$ is harmonic.  Since the target is one-dimensional, a harmonic map to $\Sph^1$ is horizontally conformal at every point at which its differential is nonzero; hence it is a harmonic morphism in the usual sense; see \cite{BairdWood}.

If $\widetilde{\phi_\alpha(X_t)}$ is a continuous lift to $\R$, Lemma~\ref{lem:circlelift} gives
\begin{equation}\label{eq:harmonic-circle-coordinate}
 \int_0^t\omega_\alpha\circ dX_s
 =\frac{\widetilde{\phi_\alpha(X_t)}-\widetilde{\phi_\alpha(X_0)}}{2\pi}.
\end{equation}
Thus the $\alpha$-coordinate of the stochastic homology is the lifted angular displacement of the Brownian path under the harmonic circle map representing $\alpha$.  For Brownian motion the quadratic variation of the right-hand side is
\[
 A_t^\alpha=\int_0^t |\omega_\alpha|^2(X_s)\,ds.
\]
For every $x\in M$, Proposition~\ref{prop:ergodic-start} gives
\[
 \frac{A_t^\alpha}{t}\longrightarrow Q(\alpha,\alpha)
 \qquad \Pp_x\text{-almost surely}.
\]
The corresponding time change is therefore asymptotically linear.  This is the circle-valued form of the martingale argument used below.

\section{The Gaussian limit}

Let $b_1=\dim H^1(M;\R)$.  By the Hodge theorem, each class $\alpha\in H^1(M;\R)$ has a unique harmonic representative, denoted by $\omega_\alpha$; see, for example, \cite[Chapter~3]{Jost}.  Define
\begin{equation}\label{eq:Q}
   Q(\alpha,\beta)
   =\int_M\langle\omega_\alpha,\omega_\beta\rangle\,d\mu
   =\frac{1}{\Vol(M)}
     \int_M\langle\omega_\alpha,\omega_\beta\rangle\,dv_g.
\end{equation}
This is a positive-definite inner product on $H^1(M;\R)$.

\begin{remark}
If $b_1(M)=0$, then $H^1(M;\R)=H_1(M;\R)=0$, so the limit statements below reduce to the zero class.
\end{remark}

The continuous-martingale central limit theorem will be used in the following form: if $M_t$ is a continuous local martingale and
\[
   \frac{\langle M\rangle_t}{t}\longrightarrow\sigma^2
\]
in probability, then
\[
   \frac{M_t}{\sqrt t}\Longrightarrow N(0,\sigma^2).
\]
The multidimensional functional form follows from convergence of the matrix of quadratic covariations.  See Rebolledo \cite{Rebolledo} and Revuz--Yor \cite{RevuzYor}.

\begin{theorem}[Central limit theorem for Brownian homology]\label{thm:main}
Let $M$ be a closed connected Riemannian manifold.  For every $x\in M$, the Brownian homology process under $\Pp_x$ satisfies
\begin{equation}\label{eq:mainCLT}
   \frac{H_t}{\sqrt t}\Longrightarrow G
   \qquad\text{in }H_1(M;\R),
\end{equation}
where $G$ is the centered Gaussian random vector characterized by
\begin{equation}\label{eq:covG}
   \operatorname{Cov}
   \bigl(\langle\alpha,G\rangle,
         \langle\beta,G\rangle\bigr)
   =Q(\alpha,\beta),
   \qquad \alpha,\beta\in H^1(M;\R).
\end{equation}
Equivalently, for every $\alpha\in H^1(M;\R)$,
\begin{equation}\label{eq:scalarCLT}
   \frac{\langle\alpha,H_t\rangle}{\sqrt t}
   \Longrightarrow
   N\bigl(0,Q(\alpha,\alpha)\bigr).
\end{equation}
If $\alpha\ne0$, then $Q(\alpha,\alpha)>0$.  The same conclusion holds for Brownian motion with any initial distribution on $M$.
\end{theorem}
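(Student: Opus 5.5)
The plan is to reduce the statement to the harmonic stochastic homology and then apply the multidimensional martingale central limit theorem. First choose the linear section $\xi_\alpha=\omega_\alpha$ given by harmonic representatives. This section is linear by uniqueness in the Hodge theorem, and Theorem~\ref{thm:bounded-realization} applies to it. By Proposition~\ref{prop:transfer} with $a_t=\sqrt t$, the families $H_t/\sqrt t$ and $\mathcal J_t^\xi/\sqrt t$ differ by at most $C_\xi/\sqrt t$ almost surely. It therefore suffices to prove \eqref{eq:mainCLT} for $\mathcal J^\xi_t/\sqrt t$. Since $H_1(M;\R)$ is finite dimensional, convergence in distribution is equivalent to convergence of the coordinate vector with respect to a basis $\alpha_1,\dots,\alpha_{b_1}$ of $H^1(M;\R)$. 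By Cram\'er--Wold, this is in turn equivalent to \eqref{eq:scalarCLT} for every $\alpha$, because $\langle\alpha,\mathcal J_t^\xi\rangle=J_t^{\omega_\alpha}$ is linear in $\alpha$.

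Fix $x\in M$. By \eqref{eq:harmmart} and the discussion after it, each $J_t^{\omega_\alpha}$ is a continuous square-integrable martingale under $\Pp_x$. By \eqref{eq:qv} its quadratic covariations are
\[
\langle J^{\omega_{\alpha_i}},J^{\omega_{\alpha_j}}\rangle_t=\int_0^t\langle\omega_{\alpha_i},\omega_{\alpha_j}\rangle(X_s)\,ds .
\]
The integrand is a bounded smooth function, so Proposition~\ref{prop:ergodic-start} gives
\[
\frac1t\langle J^{\omega_{\alpha_i}},J^{\omega_{\alpha_j}}\rangle_t\to Q(\alpha_i,\alpha_j)
\]
$\Pp_x$-almost surely, and hence in probability. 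The multidimensional martingale CLT (Rebolledo, Revuz--Yor) then yields convergence of $\bigl(J_t^{\omega_{\alpha_i}}/\sqrt t\bigr)_i$ to a centered Gaussian vector with covariance matrix $(Q(\alpha_i,\alpha_j))$. This is exactly the law of $G$ described in \eqref{eq:covG}. Alternatively, for the scalar form one can use the one-dimensional statement applied to the martingale $J^{\omega_\alpha}$ directly.

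The main subtlety is the scaling: the cited CLT is phrased for $\langle M\rangle_t/t\to\sigma^2$ with a continuous parameter $t\to\infty$. I would justify it through the Dambis--Dubins--Schwarz representation $J_t^{\omega_\alpha}=B_{A_t^\alpha}$. Brownian scaling gives $B_{A_t}/\sqrt t=\widetilde B^{(t)}_{A_t/t}$ for a standard Brownian motion $\widetilde B^{(t)}$. Since $A_t/t\to Q(\alpha,\alpha)$ in probability, uniform continuity of Brownian paths on compacts gives the Gaussian limit. The degenerate case $Q(\alpha,\alpha)=0$ is also covered by this argument.

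Positivity is immediate. If $\alpha\ne0$, then $\omega_\alpha\not\equiv0$, so $Q(\alpha,\alpha)=\Vol(M)^{-1}\|\omega_\alpha\|_{L^2}^2>0$. For an arbitrary initial distribution $\nu$, write $\Pp_\nu=\int\Pp_x\,d\nu(x)$. For any bounded continuous test function $F$ on $H_1(M;\R)$, $\Ee_x F(H_t/\sqrt t)\to\Ee F(G)$ for every $x$, and dominated convergence then gives the limit under $\Pp_\nu$. Measurability of $x\mapsto\Ee_xF(H_t/\sqrt t)$ follows from Corollary~\ref{cor:measurableHt} and the measurability of $x\mapsto\Pp_x$.
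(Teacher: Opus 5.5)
Your proposal is correct and follows essentially the same route as the paper. You split $\langle\alpha,H_t\rangle$ into the harmonic Stratonovich martingale $J^{\omega_\alpha}_t$ plus a closing term bounded by $L\|\omega_\alpha\|_\infty$. You then use Proposition~\ref{prop:ergodic-start} to get $\langle J^{\omega_\alpha}\rangle_t/t\to Q(\alpha,\alpha)$, apply the martingale central limit theorem, and finish with Cram\'er--Wold and the nonvanishing of $\omega_\alpha$ for $\alpha\ne0$. Your Dambis--Dubins--Schwarz justification of the scalar limit and your dominated-convergence argument for a general initial law fill in details that the paper only cites or asserts.
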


\begin{proof}
Fix $x\in M$ and $\alpha\in H^1(M;\R)$, and let $\omega=\omega_\alpha$ be its harmonic representative.  Proposition \ref{prop:pairing} gives
\begin{equation}\label{eq:decomp}
   \langle\alpha,H_t\rangle
   =J_t^\omega+R_t^\omega,
\end{equation}
where
\[
   J_t^\omega=\int_0^t\omega\circ dX_s
\]
and
\[
   R_t^\omega=\int_{\gamma_{X_t,X_0}}\omega.
\]
Since $\delta\omega=0$, $J_t^\omega$ is a continuous martingale.  Since the closing paths have length at most $L$,
\begin{equation}\label{eq:closingbound}
   |R_t^\omega|
   \le L\,\|\omega\|_\infty
\end{equation}
for every $t$.

The quadratic variation is
\[
   \langle J^\omega\rangle_t
   =\int_0^t|\omega|^2(X_s)\,ds.
\]
By Proposition~\ref{prop:ergodic-start},
\begin{equation}\label{eq:qvlimit}
   \frac{1}{t}\langle J^\omega\rangle_t
   \longrightarrow
   \int_M|\omega|^2\,d\mu
   =Q(\alpha,\alpha)
\end{equation}
almost surely.  The martingale central limit theorem gives
\[
   \frac{J_t^\omega}{\sqrt t}
   \Longrightarrow N\bigl(0,Q(\alpha,\alpha)\bigr).
\]
The bound \eqref{eq:closingbound} implies $R_t^\omega/\sqrt t\to0$ uniformly, and \eqref{eq:scalarCLT} follows.

For the joint law, let $\alpha_1,\ldots,\alpha_k\in H^1(M;\R)$ and let $a_1,\ldots,a_k\in\R$.  By linearity,
\[
  \sum_{j=1}^k a_j\langle\alpha_j,H_t\rangle
  =\left\langle\sum_{j=1}^k a_j\alpha_j,H_t\right\rangle.
\]
The scalar result applied to $\sum_j a_j\alpha_j$ gives the required Gaussian limit for every linear combination.  By the Cram\'er--Wold theorem, convergence of all real linear combinations implies convergence of the vector-valued laws; see \cite{Kallenberg}.  This gives the multivariate convergence and the covariance formula \eqref{eq:covG}.

Finally, if $\alpha\ne0$, its harmonic representative is not identically zero, and therefore
\[
   Q(\alpha,\alpha)=\int_M|\omega_\alpha|^2\,d\mu>0.
\]
\end{proof}

For harmonic representatives, define the $H_1(M;\R)$-valued martingale

\begin{definition}[The harmonic homology martingale]\label{def:harmonicmart}
For every $t\ge0$ define $\mathcal M_t\in H_1(M;\R)$ by
\begin{equation}\label{eq:harmonicmartdef}
   \langle\alpha,\mathcal M_t\rangle
   =\int_0^t\omega_\alpha\circ dX_s,
   \qquad \alpha\in H^1(M;\R),
\end{equation}
where $\omega_\alpha$ is the harmonic representative of $\alpha$.
\end{definition}

The map $\alpha\mapsto\int_0^t\omega_\alpha\circ dX_s$ is linear, so
\eqref{eq:harmonicmartdef} defines an element of
$H^1(M;\R)^*\simeq H_1(M;\R)$.  The process $\mathcal M_t$ has continuous
paths.  It is the special case of $\mathcal J_t^\xi$ in
Theorem~\ref{thm:bounded-realization} obtained by taking
$\xi_\alpha=\omega_\alpha$.  Hence, for a deterministic constant $C$,
\begin{equation}\label{eq:norm-bounded-distance}
   \|H_t-\mathcal M_t\|\le C
   \qquad\text{for all }t\ge0
\end{equation}
almost surely, simultaneously for all $t$.

\begin{theorem}[Functional central limit theorem]\label{thm:functional}
For $T>0$ define, on $0\le s\le1$,
\[
   \Hcal_T(s)=\frac{\mathcal M_{Ts}}{\sqrt T}.
\]
For every $x\in M$, viewed under $\Pp_x$ as a random element of $C([0,1],H_1(M;\R))$, the process $\Hcal_T$ converges in distribution, as $T\to\infty$, to Brownian motion in $H_1(M;\R)$ with covariance form $Q$.  In particular, for $0\le r\le s\le1$ and $\alpha,\beta\in H^1(M;\R)$,
\[
   \Ee\bigl[
   \langle\alpha,G_s-G_r\rangle
   \langle\beta,G_s-G_r\rangle
   \bigr]
   =(s-r)Q(\alpha,\beta).
\]
The same convergence holds for Brownian motion with any initial distribution on $M$.
\end{theorem}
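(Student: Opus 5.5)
We will prove the statement by reducing it to a functional martingale central limit theorem for finitely many scalar continuous martingales. Choose a basis $\alpha_1,\ldots,\alpha_{b_1}$ of $H^1(M;\R)$ and let $\omega_j=\omega_{\alpha_j}$ be the harmonic representatives. Since $\mathcal M_t$ is determined by its coordinates $\langle\alpha_j,\mathcal M_t\rangle=J_t^{\omega_j}$, it suffices to show that the $\R^{b_1}$-valued process
\[
   \Bigl(T^{-1/2}J^{\omega_1}_{Ts},\ldots,T^{-1/2}J^{\omega_{b_1}}_{Ts}\Bigr)_{0\le s\le1}
\]
converges in $C([0,1],\R^{b_1})$ to a centered continuous Gaussian process with independent increments and covariance matrix $s\,Q(\alpha_i,\alpha_j)$. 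By \eqref{eq:harmmart} and the discussion in Section 2, each $J^{\omega_j}$ is a continuous square-integrable martingale under $\Pp_x$. By \eqref{eq:qv}, their brackets are
\[
   \langle J^{\omega_i},J^{\omega_j}\rangle_t=\int_0^t\langle\omega_i,\omega_j\rangle(X_s)\,ds .
\]

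The key step is convergence of the rescaled bracket matrix. For the rescaled martingales $M^T_s=T^{-1/2}J_{Ts}$ we have
\[
   \langle M^{T,i},M^{T,j}\rangle_s=\frac1T\int_0^{Ts}\langle\omega_i,\omega_j\rangle(X_u)\,du .
\]
The function $\langle\omega_i,\omega_j\rangle$ is smooth and bounded on $M$. For $s>0$, Proposition~\ref{prop:ergodic-start} shows that this bracket equals $s\cdot(Ts)^{-1}\int_0^{Ts}\cdots\to s\,Q(\alpha_i,\alpha_j)$ almost surely under $\Pp_x$. For $s=0$ the bracket is identically zero. Hence we have almost sure convergence for each fixed $s\in[0,1]$, which in particular is convergence in probability. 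This is exactly the hypothesis of the multidimensional functional martingale CLT (Rebolledo \cite{Rebolledo}; see also Revuz--Yor \cite{RevuzYor}). One could also upgrade to uniform convergence in $s$, since the brackets are nondecreasing in $s$ along the diagonal and the limit is continuous, with polarization handling the off-diagonal terms. The continuity of the paths means no jump condition is needed. The theorem then gives convergence of $M^T$ to a continuous Gaussian martingale $G$ with deterministic bracket $sQ(\alpha_i,\alpha_j)$. This is Brownian motion on $H_1(M;\R)$ with covariance form $Q$, and the increment formula $\Ee[\langle\alpha,G_s-G_r\rangle\langle\beta,G_s-G_r\rangle]=(s-r)Q(\alpha,\beta)$ follows by bilinearity from the bracket. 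An alternative route avoids the functional theorem: Knight's/Dambis--Dubins--Schwarz representation after a linear change of basis orthonormalizing $Q$, combined with the uniform bracket convergence, yields the same conclusion.

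For an arbitrary initial distribution $\nu$, the law $\Pp_\nu=\int\Pp_x\,d\nu(x)$ is a mixture. Convergence in distribution for each $x$ gives $\Ee_x F(\Hcal_T)\to\Ee F(G)$ for every bounded continuous $F$ on $C([0,1],H_1(M;\R))$. Dominated convergence in $x$ then gives the same limit under $\Pp_\nu$.

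The main obstacle is only bookkeeping: making sure the version of the martingale functional CLT we cite applies to processes indexed by $T\to\infty$ along a continuum (it suffices to check along arbitrary sequences $T_n\to\infty$), and that almost sure bracket convergence at each fixed $s$ suffices. If the cited form requires uniform convergence in $s$, we use the monotonicity argument above. Finally, we note that by \eqref{eq:norm-bounded-distance} the same limit holds for the process $s\mapsto H_{Ts}/\sqrt T$ wherever it is defined, since the perturbation is at most $C/\sqrt T$ uniformly in $s$. Because $H$ is not continuous, that statement is phrased for the polygonal interpolation mentioned in the abstract rather than in $C([0,1],\cdot)$.
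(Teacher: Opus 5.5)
Your proposal is correct and follows essentially the paper's proof: coordinates in a harmonic basis, almost sure convergence of the rescaled bracket matrix via Proposition~\ref{prop:ergodic-start}, and Rebolledo's functional martingale CLT, with your monotonicity-plus-polarization upgrade to uniformity in $s$ in place of the paper's uniform-Lipschitz equicontinuity argument. Your mixture and dominated-convergence step for a general initial law fills in a point the paper only asserts; the aside about Knight's theorem would need its asymptotic version, since after orthonormalizing $Q$ the off-diagonal brackets $\int_0^t\langle\omega_i,\omega_j\rangle(X_u)\,du$ are only $o(t)$, not identically zero.
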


\begin{proof}
Fix $x\in M$.  Choose harmonic forms $\omega_1,\ldots,\omega_{b_1}$ forming a basis of $H^1(M;\R)$.  In the dual coordinates the process $\mathcal M_t$ is the vector martingale
\[
   M_t=(J_t^{\omega_1},\ldots,J_t^{\omega_{b_1}}).
\]
Its matrix quadratic variation is
\[
   \langle J^i,J^j\rangle_t
   =\int_0^t
   \langle\omega_i,\omega_j\rangle(X_u)\,du.
\]
For $0\le s\le1$,
\[
  \frac1T\langle J^i,J^j\rangle_{Ts}
  =\frac1T\int_0^{Ts}
  \langle\omega_i,\omega_j\rangle(X_u)\,du.
\]
By Proposition~\ref{prop:ergodic-start}, this converges $\Pp_x$-almost surely to
\[
   sQ(\alpha_i,\alpha_j).
\]
The convergence is uniform in $s\in[0,1]$: the functions on the left are uniformly Lipschitz because the integrands are bounded, and convergence holds almost surely on all rational $s$ after taking a countable intersection of full-measure sets.  Uniform equicontinuity then gives uniform convergence on $[0,1]$.  For the rescaled martingale $J_s^{(T)}=T^{-1/2}J_{Ts}$,
\[
 \langle J^{(T),i},J^{(T),j}\rangle_s
 =\frac1T\langle J^i,J^j\rangle_{Ts}
 \longrightarrow sQ(\alpha_i,\alpha_j)
\]
uniformly in $s$, almost surely and hence in probability.  The functional martingale central limit theorem then gives the asserted convergence; see Rebolledo \cite{Rebolledo}.
\end{proof}

\begin{remark}
The limiting vector $G$ in Theorem~\ref{thm:main} has the same law as the time-one value $G_1$ of the limiting Brownian motion in Theorem~\ref{thm:functional}.  Both are centered Gaussian random vectors on $H_1(M;\R)$ with covariance form $Q$.
\end{remark}

\begin{theorem}[Functional CLT for the closed singular cycles]\label{thm:functional-H}
For $n\in\mathbb N$ let $\widehat H_n:[0,1]\to H_1(M;\R)$ be the polygonal
interpolation of the points
\[
   \frac{H_k}{\sqrt n},\qquad k=0,1,\ldots,n.
\]
Then, for every $x\in M$, under $\Pp_x$,
\[
   \widehat H_n\Longrightarrow G_\cdot
   \qquad\text{in }C([0,1],H_1(M;\R)),
\]
where $G_\cdot$ is the Brownian motion of Theorem~\ref{thm:functional}.  The same convergence holds for Brownian motion with any initial distribution on $M$.

\end{theorem}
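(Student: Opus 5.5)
The plan is to compare $\widehat H_n$ with the continuous process $\Hcal_n(s)=\mathcal M_{ns}/\sqrt n$ of Theorem~\ref{thm:functional}. Convergence will then follow from a converging-together (Slutsky-type) lemma in $C([0,1],H_1(M;\R))$. Concretely, I will show
\[
   \sup_{0\le s\le1}\bigl\|\widehat H_n(s)-\Hcal_n(s)\bigr\|\longrightarrow0
   \qquad\text{in }\Pp_x\text{-probability}.
\]
Since $\Hcal_n\Rightarrow G_\cdot$ by Theorem~\ref{thm:functional}, taken along the integer sequence $T=n$, this gives $\widehat H_n\Rightarrow G_\cdot$.

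The first step controls the grid points. Bound \eqref{eq:norm-bounded-distance} gives $\|H_k-\mathcal M_k\|\le C$ for all $k$ almost surely, so $\|\widehat H_n(k/n)-\Hcal_n(k/n)\|\le C/\sqrt n$. Next, write $P_n$ for the polygonal interpolation of the points $\mathcal M_k/\sqrt n$. Interpolation is linear, and the sup norm of a polygonal function with given nodes is attained at the nodes. Hence
\[
   \sup_s\|\widehat H_n(s)-P_n(s)\|\le C/\sqrt n.
\]

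It therefore remains to show $\sup_s\|P_n(s)-\Hcal_n(s)\|\to0$ in probability. On each interval $[k/n,(k+1)/n]$, both $P_n$ and $\Hcal_n$ take the value $\Hcal_n(k/n)$ at the left endpoint. So the difference on that interval is at most $2\sup\{\|\Hcal_n(s)-\Hcal_n(r)\|:|s-r|\le 1/n\}$, twice the modulus of continuity of $\Hcal_n$ at mesh $1/n$. This is the main step. The family $\{\Hcal_n\}$ is tight in $C([0,1],H_1(M;\R))$, because it converges in distribution by Theorem~\ref{thm:functional}. By the Arzel\`a--Ascoli characterization of tightness, for every $\varepsilon>0$,
\[
   \lim_{\delta\to0}\limsup_{n\to\infty}\Pp_x\bigl(w(\Hcal_n,\delta)>\varepsilon\bigr)=0.
\]
Since $w(\Hcal_n,1/n)\le w(\Hcal_n,\delta)$ once $1/n\le\delta$, it follows that $w(\Hcal_n,1/n)\to0$ in probability.

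As an alternative to the tightness argument, one could bound the oscillation of $\mathcal M$ over unit time intervals directly. Each coordinate is a martingale with quadratic variation increment at most $\|\omega\|_\infty^2$ per unit time. Burkholder--Davis--Gundy and a union bound over the $n$ intervals then give, for instance, fourth moments of order $n\cdot 1/n^{2}\to0$.

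Combining the three estimates yields uniform convergence to zero in probability of $\widehat H_n-\Hcal_n$, and the converging-together lemma gives the result under $\Pp_x$. For a general initial distribution $\nu$, the law of the path under $\Pp_\nu$ is $\int\Pp_x\,d\nu(x)$. Dominated convergence applied to $\Ee_x[F(\widehat H_n)]$, for bounded continuous $F$, transfers the convergence. Measurability of $x\mapsto\Pp_x$ is standard.
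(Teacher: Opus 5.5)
Your proposal is correct and follows the paper's proof: the same reduction via \eqref{eq:norm-bounded-distance} to the polygonal interpolation of $\mathcal M_k/\sqrt n$, the same bound by twice the oscillation of $\mathcal M$ over each mesh interval, and the same Slutsky-type conclusion from Theorem~\ref{thm:functional}. The paper controls that oscillation exactly as in your alternative (Burkholder--Davis--Gundy with some $p>2$ and a union bound over the $n$ unit intervals); your primary argument instead gets it from the tightness of $\{\Hcal_n\}$ already implied by Theorem~\ref{thm:functional}, which is equally valid and needs no moment estimates.
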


\begin{proof}
Fix $x\in M$.  Let $\widehat{\mathcal M}_n$ be the polygonal interpolation of
$\mathcal M_k/\sqrt n$.  By \eqref{eq:norm-bounded-distance},
\[
 \sup_{0\le s\le1}
 \|\widehat H_n(s)-\widehat{\mathcal M}_n(s)\|
 \le \frac{C}{\sqrt n}\longrightarrow0 .
\]
Compare $\widehat{\mathcal M}_n$ with the continuously sampled process
$n^{-1/2}\mathcal M_{n\cdot}$.  Fix $p>2$.  In harmonic coordinates the
quadratic variation accumulated on any interval of length at most one is
bounded uniformly.  By the Burkholder--Davis--Gundy inequality \cite[Chapter~IV]{RevuzYor}, the $p$-th moment of the maximum of a continuous martingale is bounded by a constant multiple of the $p/2$-th moment of its quadratic variation.  Since the latter is uniformly bounded on intervals of length one, there is a constant $C_p$ such that, uniformly in $k$,
\[
 \mathbf E_x\!
 \left[\sup_{0\le u\le1}
 \|\mathcal M_{k+u}-\mathcal M_k\|^p\right]\le C_p .
\]
For every $\varepsilon>0$,
\[
 \mathbf P_x\!\left(
 \max_{0\le k<n}\sup_{0\le u\le1}
 \|\mathcal M_{k+u}-\mathcal M_k\|>\varepsilon\sqrt n
 \right)
 \le \frac{C_p}{\varepsilon^p}n^{1-p/2}\longrightarrow0.
\]
On the interval $[k/n,(k+1)/n]$, the distance between the polygonal
interpolation and the continuously sampled martingale is at most twice
$\sup_{0\le u\le1}\|\mathcal M_{k+u}-\mathcal M_k\|$.  The preceding
estimate therefore shows that
\[
 \sup_{0\le s\le1}
 \left\|\widehat{\mathcal M}_n(s)-n^{-1/2}\mathcal M_{ns}\right\|
 \longrightarrow0
\]
in probability.  Slutsky's theorem, in the form saying that an $o_{\mathbf P}(1)$ perturbation does not change a weak limit, together with Theorem~\ref{thm:functional}, completes the proof; see \cite{Kallenberg}.
\end{proof}

\section{The linear normalization}

The linear normalization has an almost sure limit.

\begin{theorem}[Vanishing of the Schwartzman normalization]\label{thm:slln}
For every $x\in M$, under $\Pp_x$,
\[
   \frac{H_t}{t}\longrightarrow0
   \qquad\text{almost surely in }H_1(M;\R).
\]
The same conclusion holds for Brownian motion with any initial distribution on $M$.
\end{theorem}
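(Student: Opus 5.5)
By \eqref{eq:norm-bounded-distance}, $\|H_t-\mathcal M_t\|\le C$ almost surely for all $t$, with $C$ deterministic. Proposition~\ref{prop:transfer} with $a_t=t$ then reduces the statement to showing $\mathcal M_t/t\to0$ $\Pp_x$-almost surely. Since $H_1(M;\R)$ is finite dimensional, it suffices to check each coordinate. So we fix harmonic forms $\omega_1,\ldots,\omega_{b_1}$ forming a basis, as in the proof of Theorem~\ref{thm:functional}, and show that $J_t^{\omega_i}/t\to0$ almost surely for each $i$. The case $b_1=0$ is trivial.

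For a single harmonic $\omega$, the process $J^\omega$ is a continuous square-integrable martingale with $\langle J^\omega\rangle_t=\int_0^t|\omega|^2(X_s)\,ds\le t\|\omega\|_\infty^2$. We use the Dambis--Dubins--Schwarz representation: on a possibly enlarged probability space there is a standard real Brownian motion $\beta$ with $J_t^\omega=\beta_{\langle J^\omega\rangle_t}$. There are two cases.

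\begin{itemize}
\item If $\langle J^\omega\rangle_\infty<\infty$, then $J_t^\omega$ converges, so $J^\omega_t/t\to0$.
\item If $\langle J^\omega\rangle_\infty=\infty$, the strong law for Brownian motion gives $\beta_u/u\to0$ as $u\to\infty$. Hence
\[
\frac{|J_t^\omega|}{t}=\frac{|\beta_{\langle J^\omega\rangle_t}|}{\langle J^\omega\rangle_t}\cdot\frac{\langle J^\omega\rangle_t}{t}\le \frac{|\beta_{\langle J^\omega\rangle_t}|}{\langle J^\omega\rangle_t}\,\|\omega\|_\infty^2\longrightarrow0 .
\]
\end{itemize}

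In fact Proposition~\ref{prop:ergodic-start} shows that $\langle J^\omega\rangle_t/t\to Q(\alpha,\alpha)>0$ for $\omega\ne0$, so only the second case occurs. However, the argument above does not need this.

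As an alternative that avoids time change, one could apply Doob's $L^2$ inequality on dyadic blocks $[2^k,2^{k+1}]$. This gives
\[
\Pp_x\Bigl(\sup_{t\le2^{k+1}}|J_t^\omega|>\varepsilon2^k\Bigr)\le 4\cdot2^{k+1}\|\omega\|_\infty^2/(\varepsilon^2 4^k),
\]
which is summable in $k$, and Borel--Cantelli finishes the argument.

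For an arbitrary initial distribution $\nu$, the event $\{H_t/t\to0\}$ is measurable by Corollary~\ref{cor:measurableHt} applied along rational $t$. Between rational times we use the bound \eqref{eq:norm-bounded-distance} together with the continuity of $\mathcal M$, so no further control is needed there. Integrating $\Pp_x(\cdot)=1$ against $\nu$ then gives the result. We expect no real obstacle. The only delicate points are the null-set bookkeeping, which is handled by the ``outside one null set'' clause of Theorem~\ref{thm:bounded-realization}, and measurability in $x$ when integrating over $\nu$.
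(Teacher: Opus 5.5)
Your proof is correct and is essentially the paper's argument. You discard the uniformly bounded closing term, represent each harmonic coordinate $J^\omega$ via Dambis--Dubins--Schwarz, and combine the deterministic bound $\langle J^\omega\rangle_t\le\|\omega\|_\infty^2t$ with sublinear growth of Brownian motion. The paper avoids your case split by bounding $|J_t^\omega|\le\sup_{0\le u\le\|\omega\|_\infty^2t}|B_u|=o(t)$, which covers finite and infinite $\langle J^\omega\rangle_\infty$ at once. Since nothing in the argument depends on the starting point, it also runs verbatim under $\Pp_\nu$, so your integration over $\nu$ and the measurability bookkeeping it requires are unnecessary. Your Doob--Borel--Cantelli variant is a valid alternative that avoids the time change altogether.
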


\begin{proof}
Fix $x\in M$.  It is enough to pair with a basis of $H^1(M;\R)$.  Let $\omega$ be harmonic.  Since $M$ is compact,
\[
  \langle J^\omega\rangle_t
  =\int_0^t|\omega|^2(X_s)\,ds
  \le \|\omega\|_\infty^2 t.
\]
By the Dambis--Dubins--Schwarz theorem, a continuous local martingale can be represented as Brownian motion run at its quadratic-variation clock; see \cite[Chapter~V]{RevuzYor}.  Thus there is a one-dimensional Brownian motion $B$ such that, after the usual enlargement of the probability space if necessary,
\[
   J_t^\omega=B_{\langle J^\omega\rangle_t}.
\]
Since $\langle J^\omega\rangle_t\le \|\omega\|_\infty^2t$, the law of the iterated logarithm for Brownian motion \cite[Chapter~II]{RevuzYor} implies
\[
   \sup_{0\le u\le \|\omega\|_\infty^2t}|B_u|=o(t)
   \qquad\text{almost surely}.
\]
Hence $J_t^\omega/t\to0$ almost surely.  The closing term is uniformly bounded, hence also tends to zero after division by $t$.  Proposition \ref{prop:pairing} completes the proof.
\end{proof}

\begin{remark}
This theorem is the Brownian analogue of the zero Schwartzman cycle for a centered diffusive motion.  The nontrivial asymptotic information appears one order earlier, at the scale $\sqrt t$, and is described by Theorem \ref{thm:main}; Theorem \ref{thm:functional} gives the corresponding process-level statement for the harmonic homology martingale.
\end{remark}

\section{The circle and the planar winding problem}

For the circle the normalization and covariance can be computed explicitly.

\begin{example}[Brownian motion on the circle]\label{ex:circle}
Let $M=\Sph^1=\R/(2\pi\Z)$ with its standard metric and let $X_t$ be Brownian motion with generator $\frac12\frac{d^2}{d\theta^2}$.  If $\widetilde X_t$ is a lift to $\R$, then
\[
   \widetilde X_t=\widetilde X_0+B_t,
\]
where $B_t$ is standard one-dimensional Brownian motion.  Closing $X_{[0,t]}$ by an arc of length at most $\pi$ changes the lifted angular displacement by a bounded amount.  Thus the homology coordinate $H_t\in H_1(\Sph^1;\R)\simeq\R$ satisfies
\[
   H_t=\frac{B_t}{2\pi}+O(1)
\]
if the generator of $H_1(\Sph^1;\Z)$ is paired with $d\theta/(2\pi)$.  Consequently,
\[
   \frac{H_t}{\sqrt t}
   \Longrightarrow
   N\left(0,\frac{1}{4\pi^2}\right).
\]
This agrees with Theorem \ref{thm:main}, since
\[
   Q\left(\left[\frac{d\theta}{2\pi}\right],
          \left[\frac{d\theta}{2\pi}\right]\right)
   =\frac{1}{4\pi^2}.
\]
\end{example}

Spitzer's theorem concerns another process.  Let $Z_t$ be planar Brownian motion started away from the origin and write, continuously along the path,
\[
   Z_t=R_t e^{i\Theta_t}.
\]
The angular motion is a Brownian motion run at the random clock
\[
   A_t=\int_0^t\frac{ds}{R_s^2}.
\]
Here $R_t$ is a Bessel process of dimension $2$.  This clock is singular near the origin and is not asymptotically a deterministic multiple of $t$.  Spitzer's theorem states that
\[
   \frac{2\Theta_t}{\log t}
\]
converges in distribution to the standard Cauchy law \cite{Spitzer}.  Pitman and Yor developed a systematic study of these planar asymptotic laws and their random time changes \cite{PitmanYor}.

For Brownian homology on a closed manifold, the clock attached to a harmonic form is
\[
   \int_0^t|\omega|^2(X_s)\,ds.
\]
The function $|\omega|^2$ is bounded, and Proposition~\ref{prop:ergodic-start} gives
\[
   \frac1t\int_0^t|\omega|^2(X_s)\,ds
   \longrightarrow Q([\omega],[\omega])
   \qquad \Pp_x\text{-almost surely}
\]
for every $x\in M$.  The quadratic-variation clock is therefore asymptotically linear and deterministic, which gives the $\sqrt t$ normalization and the Gaussian law.

\section{Dependence on the metric and the covariance geometry}

The limiting law contains geometric information.  If $g$ is changed, then both Brownian motion and the harmonic representatives change.  The covariance form
\[
   Q_g(\alpha,\beta)
   =\frac{1}{\Vol_g(M)}
    \int_M\langle\omega_{\alpha,g},\omega_{\beta,g}\rangle_g\,dv_g
\]
is the Hodge inner product on $H^1(M;\R)$ normalized by total volume.  Thus the limiting law is the centered Gaussian measure on $H_1(M;\R)=H^1(M;\R)^*$ whose covariance form is this normalized Hodge inner product.

If $\omega_1,\ldots,\omega_{b_1}$ is any basis of harmonic $1$-forms and $\alpha_1,\ldots,\alpha_{b_1}$ the corresponding cohomology basis, the covariance matrix is
\begin{equation}\label{eq:Sigma}
   \Sigma_{ij}
   =\frac{1}{\Vol(M)}
    \int_M\langle\omega_i,\omega_j\rangle\,dv_g.
\end{equation}
The matrix is symmetric and positive definite.  Choosing a $Q$-orthonormal basis makes the limiting vector a standard Gaussian in $\R^{b_1}$.

\subsection{A flat torus}

Let $M=\R^n/\Lambda$ be a flat torus.  Choose linear coordinates on $\R^n$ and let $\ell_1,\ldots,\ell_n$ be a basis of linear forms whose classes determine a basis of $H^1(M;\R)$.  The harmonic representatives are the constant forms
\[
   \omega_i=d\ell_i.
\]
After fixing a lift of the initial point, Brownian motion on $M$ lifts to
\[
   \widetilde X_t=\widetilde X_0+B_t,
\]
where $B_t$ is Euclidean Brownian motion in $\R^n$.  Therefore
\[
   \int_0^t\omega_i\circ dX_s
   =\ell_i(B_t),
\]
and the corresponding coordinate of the closed homology class is
\[
   \langle[\omega_i],H_t\rangle
   =\ell_i(B_t)+R_t^i,
   \qquad |R_t^i|\le C_i.
\]
Thus
\[
   \frac{1}{\sqrt t}
   \bigl(\langle[\omega_1],H_t\rangle,\ldots,
   \langle[\omega_n],H_t\rangle\bigr)
   \Longrightarrow N(0,\Sigma),
\]
with
\[
   \Sigma_{ij}=\langle\omega_i,\omega_j\rangle_g.
\]
Since the forms are constant, this agrees with \eqref{eq:Sigma}.  In the rectangular case
\[
   M=\R^n/(L_1\Z\oplus\cdots\oplus L_n\Z),
\]
with the standard Euclidean metric and
\[
   \omega_i=\frac{dx_i}{L_i},
\]
then
\[
   \Sigma_{ij}=\frac{\delta_{ij}}{L_i^2}.
\]
Hence the limiting homology coordinates are independent centered Gaussians with variances $L_i^{-2}$.

\subsection{A closed hyperbolic surface}

Let $\Sigma_g$ be a closed oriented hyperbolic surface of genus $g\ge2$, with its metric of curvature $-1$.  Then
\[
   \dim H^1(\Sigma_g;\R)=2g.
\]
Choose a basis
\[
   \alpha_1,\ldots,\alpha_{2g}
\]
of $H^1(\Sigma_g;\R)$ and let
\[
   \omega_1,\ldots,\omega_{2g}
\]
be the harmonic representatives.  The covariance matrix of the limiting homology is
\begin{equation}\label{eq:hyperbolic-surface-cov}
   \Sigma_{ij}
   =\frac{1}{\operatorname{Area}(\Sigma_g)}
     \int_{\Sigma_g}\langle\omega_i,\omega_j\rangle\,dv.
\end{equation}
Since $\operatorname{Area}(\Sigma_g)=4\pi(g-1)$, this becomes
\[
   \Sigma_{ij}
   =\frac{1}{4\pi(g-1)}
     \int_{\Sigma_g}\langle\omega_i,\omega_j\rangle\,dv.
\]
Accordingly,
\[
   \frac{H_t}{\sqrt t}
   \Longrightarrow N(0,\Sigma)
   \qquad\text{in }H_1(\Sigma_g;\R).
\]
For an integral class $\alpha\in H^1(\Sigma_g;\Z)$, the harmonic representative determines a harmonic map
\[
   \phi_\alpha:\Sigma_g\to\Sph^1,
\]
and the coordinate $\langle\alpha,H_t\rangle$ is the lifted angular displacement of $\phi_\alpha(X_t)$ up to the bounded closing term.  Thus the covariance in \eqref{eq:hyperbolic-surface-cov} records the Hodge geometry of the hyperbolic surface rather than a constant Euclidean form.

\section{Stochastic holonomy}

The homology considered here is abelian, whereas stochastic parallel transport depends on the ordered path.  For a principal bundle with
compact structure group and a smooth connection, Brownian parallel transport
is defined by a horizontal Stratonovich equation \cite{Ito1963,Emery}.  In the
nonabelian case its holonomy is not determined by $H_t$; path ordering retains
information that disappears after passage to first homology.  For an abelian
flat connection, holonomy factors through homology, and the
results above apply after evaluating $H_t$ by the corresponding character.

\section*{Acknowledgments}

The first author would like to acknowledge Proyecto PAPIIT IN103324
(DGAPA, UNAM, M\'exico) for its financial support.  The authors also
acknowledge the use of ChatGPT (OpenAI) and Claude (Anthropic) for
proofreading, bibliographic searches, and refinement of the exposition.
The mathematical responsibility for the manuscript remains entirely
with the authors.

\end{document}